\newtheorem{thm}{Theorem}[section]
\newtheorem{lem}[thm]{Lemma}
\newtheorem{prop}[thm]{Proposition}
\theoremstyle{remark}
\newtheorem{rem}{Remark}[section]
\newtheorem{example}{Example}[section]
\theoremstyle{definition}
\newtheorem{defn}[thm]{Definition}
\newtheorem*{LY Theorem}{Lee-Yang Theorem}
\numberwithin{equation}{section}
\numberwithin{figure}{section}
\font\nt=cmr7
\def\note#1
\newcommand{\Hom}{\operatorname{Hom}}
\newcommand{\spec}{\operatorname{spec}}
\newcommand{\Z}{{\Bbb Z}}
\renewcommand{\j}{{\bar j}}
\def\B0{{\mathbf{0}}}
\def\n{{\mathbf{n}}}
\def\m{{\mathbf{n}}}
\newcommand{\Det}{\operatorname{Det}}
\def\Empty{}
\newcommand\oplabel[1]{
  \def\OpArg{#1} \ifx \OpArg\Empty {} \else
  	\label{#1}
  \fi}
\newcommand{\comm}[1]{}
\newcommand{\comment}[1]{}
\begin{document}

\bigskip\bigskip

\title[Periods ]{Periodic orbits of a dynamical
 system related to a knot}

\author {Lilya Lyubich}
\date{\today}

 \begin{abstract} 
   Following \cite{SiWi 2} we  consider a knot group $G$,
 its commutator subgroup
 $K=[G,G]$, a finite  group $\Sigma$ and the space $\Hom  
 (K,\Sigma )$ 
of all representations \(\rho : K \rightarrow \Sigma\),
 endowed with the weak topology. We choose a meridian \( x \in G\) of the knot
 and consider the  homeomorphism $ \sigma_{x}$ 
 of  $ \Hom (K, \Sigma)$ onto itself:
\( \sigma_{x}\rho(a) = \rho(xax^{-1})   \  \forall   a\in K,\   \rho\in
 \Hom (K, \Sigma )
 \).
As proven in \cite{SiWi 1}, the dynamical system
 \(( \Hom (K,\Sigma), \sigma_{x} )\) is 
a shift of finite type. In the case when $\Sigma$ is abelian,
 $ \Hom (K,\Sigma)$ is 
finite.

In this paper we calculate the periods of orbits
 of $( \Hom  (K,\Z /p), \sigma_{x})$  where $p$ is prime
 in terms of the roots of the
 Alexander polynomial of the knot. In the case of two-bridge knots we give
a complete description of the set of periods. 

\end{abstract}

\setcounter{tocdepth}{1}
 
\maketitle
\tableofcontents

\section{Introduction }
Let $G$ be a knot group, $\kappa :G \rightarrow \Z $ be an epimorphism, and 
 let $x \in G $  be a distinguished element such that $ \kappa (x) = 1  $.
A triple $(G, \kappa , x )$ is a particular case of an augmented group
system, introduced by D.~ Silver in \cite{Si}.

Denote by $K$ the commutator subgroup of $G$,\; $K=[G,G].$ Then
 $  K= \ker \kappa$. $\; \;$ The group $ K$ may or may not be finitely
 generated, but it has a finite
 $\Z $-dynamic presentation (see for example \cite{SiWi 1}).

Let $\Sigma$ be a finite group. Let us consider a dynamical system consisting
of the set $\Hom (K, \Sigma )$ of all representations 
$\rho : K \rightarrow
 \Sigma $ endowed with the weak topology, together with the homeomorphism 
$\sigma _{x}$ (the shift map): 
\[
\sigma _{x} \; : \Hom (K, \Sigma )   \rightarrow \Hom (K, \Sigma ); \;\; 
\sigma _{x} \rho(a) = \rho (x a x^{-1}) \;\;\; \forall a \in K,\;
\rho \in \Hom (K, \Sigma ).\] 
D.~ Silver and S.~ Williams have proved in \cite{SiWi 1} that           
the dynamical system 

\noindent  
$(\Hom (K, \Sigma ) , \sigma _{x})$ is a shift of finite type,
and it can be completely described by a finite directed graph, so that
to each representation $\rho \in \Hom (K, \Sigma ) \; $ corresponds
 a bi-infinite
path in the graph.
 We consider a special case when $\Sigma = \Z /p, \; p $  is a prime.
In this case the dynamical system is particularly simple: it is finite
\cite{SiWi 2}. So the graph consists of several cycles. Our goal is 
to calculate the periods of the cycles.
 
 In section 2 we briefly describe the $\Z $-dynamic presentation of the 
commutator subgroup $K$.
In section 3  we remind  the reader the definition of the Alexander
matrix of a knot obtained from a finite presentation of the knot group.
 In section 4
we describe  $(\Hom (K, \Z /p ) , \sigma _{x})$ as a shift in the space
of sequences satisfying a recurrent equation that is closely related 
to the Alexander matrix of the knot.
 In section 5  we prove that the shift
$ \sigma _{x}$ has roughly the same spectral structure as an Alexander 
matrix  of the knot : if we exclude the eigenvectors with eigenvalue $0$,
there is one-to-one correspondence between sequences of adjoint vectors
for eigenvectors of $ \sigma _{x}$ and those of the pencil
$B-tA$, where $B-tA$ is the Alexander matrix for the Wirtinger 
presentation of the knot group.
In section 6 we review how to reconstruct the normal form of the 
Alexander matrix
 from its 
 invariant polynomials. In section 7 we calculate
 the least common multiple of periods of orbits
of  $(\Hom (K, \Z /p ) , \sigma _{x}).$ In section 8 we find the set of periods
 of orbits of the extended dynamical system  
$(\Hom (K, F ) , \sigma _{x})$, where 
$F$ is the splitting field of Alexander polynomial of the knot over $\Z /p$.
In section 9 we find the set of periods of  $(\Hom (K, \Z /p ) , \sigma _{x})$
in case of
 two-bridge knots.  Section 10 contains several examples.

I am very grateful to Kunio Murasugi for suggesting   this problem to me  
and for reading the manuscript,
to Yuri~ I.~ Lyubich, Dan Silver and Susan Williams for
useful discussions and references, and to Misha Lyubich
for his help.

\section{$\Z $- dynamic presentation of the commutator subgroup of 
a knot group}

Let $\mathcal{K}$ be a knot,\  $G$ the knot group   with distinguished
meridian $x$ and a presentation
\begin{equation}\label{pr0}
(x,x_{1},\ldots ,x_{\n}|\tilde{r}_{1},\ldots ,\tilde{r}_{\m}).
\end{equation}
There is a unique epimorphism $\kappa : G \rightarrow Z,\ $ such that
$\ \kappa(x)=1$.
Then $$ker\, \kappa\, =\,[G,G]\, =\,K$$ is the commutator subgroup of $G$.
 Clearly
 $\tilde{r}_{i} \in K$ (since $\kappa(\tilde {r}_{i})\,=\,0,\ i=1,\ldots ,\m )$.
Take $a_{i}=x_{i}x^{-\kappa(x_{i})}$. Then $ \kappa(a_{i})=0$, so
$a_{i}\in K,\  i=1,\ldots ,\n $,\ and we can rewrite presentation (\ref {pr0}) 
in the
 alphabet $(x,a_{1},\ldots , a_{\n})$:
\begin{equation}\label{presG}
G\cong (x,a_{1},\ldots ,a_{\n}|r_{1},\ldots ,r_{\m}),\  \mbox{where}\ 
                  r_{i}\in K,\  i=1,\ldots ,\n .
\end{equation}
 The total power of $x$\  in each $r_{i}$ is 0,\ and $r_{i}$ can
be written as \[x^{j_{1}}a_{i_{1}}x^{-j_{1}}x^{j_{2}}a_{i_{2}}x^{-j_{2}}\cdot 
\ldots 
\cdot x^{j_{s}}a_{i_{s}}x^{-j_{s}}.\]
Let us consider an automorphism $ \sigma _{x}   : 
 K \rightarrow K,\; \sigma _{x}(a)=
xax^{-1}\;\; \forall a \in K$.\; $K$ has a $\Z$-dynamic presentation: \;$K \cong
(a_{1j},\ldots ,a_{\n j}|r_{1j},\ldots ,r_{\m j})$,\;$ j\in \Z$, where
\[ a_{i 0}=a_{i},\;
a_{ij}\,= \,\sigma _{x}^{j}a_{i0}\,= \,x^{j}a_{i0}x^{-j},\;
  r_{i0}=r_{i},\;r_{ij} = \,\sigma_{x}^{j}r_{i0} \]
(Note that 
$ r_{ij}$ is obtained
from $r_{i0}$ by adding $j$ to the second subindex of each
 $a_{\alpha \beta}$ occurring in 
$r_{i})$. 

Shifting   if needed the second subindex: $\;j:=j+t $ in relators
 and generators, we can assume
without loss of generality that for each $i=1,\ldots ,\n $, minimal $j$ 
such that
 $a_{ij} $ occurs in
$r_{i0},\ldots ,r_{\m 0}$ is zero. Denote maximal $j$ such that $a_{ij} $ occurs
 in $r_{10},\ldots ,r_{\m 0}$\, by $M$.

Later we will use  the Wirtinger presentation of the knot group G, which is
the presentation of the form
$(x,x_{1} \ldots x_{\n}|r_{1}, \ldots r_{\n})$
where 
$x, x_{1},\ldots x_{\n} $ are all meridians,
 and to each crossing corresponds a relator
 of the form $ u v u^{-1} w^{-1} $ with $ u,v,w $ being generators.%
 \footnote  
 {Here  $\n +1$ is the number of crossings in a knot diagram.
 One relator is skipped since it is a consequence of the others.}

\section{Alexander matrix for a presentation of the knot group}

 Let $\gamma : G \rightarrow \{t^{j}\}_{j \in Z} $ be an epimorphism of
the knot group onto a multiplicative infinite cyclic group : $\gamma(x) = t ,
\, \gamma(a) = 1 \; \forall a \in K$. Remind (see \cite{CrF}, \cite{M})
 that Alexander matrix 
for presentation
 (\ref {presG})
is polynomial matrix $\m $ by $\n $,
\footnote{We delete the first column $\gamma (\frac{\partial r_{i}}{\partial x}
)$,
obtaining an equivalent matrix, i.e., a  matrix with the same 
invariant polynomials (see Section 8 of this paper).}
 $\mathcal{A}(t)$, with entries 
\[\mathcal {A}_{ik}(t) =  \gamma (\frac{\partial r_{i}}{\partial a_{k}}),\]
 where
$\frac{\partial}{\partial a_{k}}$ are the free derivatives in the group 
ring of $G$ that we denote $\Z (G)$ . We calculate 
the free derivatives using the rules:
\[\frac{\partial}{\partial a_{k}}(\nu_{1}+\nu_{2})=
\frac{\partial}{\partial a_{k}}\nu_{1} +\frac{\partial}{\partial a_{k}}\nu_{2}\]
\[\frac{\partial}{\partial a_{k}}(\nu_{1}\nu_{2})=
\frac{\partial}{\partial a_{k}}\nu_{1}+\nu_{1} \frac{\partial}{\partial a_{k}}
\nu_{2} \;\;\;\forall\nu_{1}, \nu_{2} \in Z(G)\]
Note that $$\frac{\partial a_{i}}{\partial a_{k}} = \delta _{ik}$$
 \[\frac{\partial}{\partial a} a^{n}
 = 1 + a + a^2 + \ldots +a^{n-1}, \;\;\]
 \[\frac{\partial}{\partial a} a^{-n} =
 -a^{-1} \! - a^{-2} \! - \ldots - a^{-n},\,
 \mbox{for} \; n > 0 .\]
 So when we differentiate the word \[r_{i} = w_{1}x^{j_{1}}a_{k}^{c_{1}}x^{-j_{1}}
w_{2}x^{j_{2}}a_{k}^{c_{2}}x^{-j_{2}}\cdots x^{j_{s}}a_{k}^{c_{s}}x^{-j_{s}}w_{s+1}
= w_{1}a^{c_{1}}_{kj_{1}}w_{2}a^{c_{2}}_{kj_{2}} \cdots  w_{s+1}\] where $ w_{i}$ 
are the words consisting of blocks $x^{\beta}a_{\alpha}x^{-\beta}$
 with $\alpha \neq k$, and  then apply $\gamma \; , $ 
we get \[\gamma (\frac {\partial r_{i}}{\partial a_{k}})
 = c_{1}t^{j_{1}} + c_{2}t^{j_{2}} + \ldots +  c_{s}t^{j_{s}} 
= \sum_{j=1}^{M} A_{ik}^{j} t ^{j}\]
( since $ \;  0\leq j_{1}, \ldots j_{s} \leq M),\;$
where $A_{ik}^{j}$ is equal to the total power of $a_{kj}$ in $r_{i0}.$
 The Alexander matrix for presentation (\ref{presG}) is then 
\[\mathcal{A}(t)=
\left ( \gamma (\frac {\partial r_{i}}{\partial a_{k}}) \right )_{\begin{array}
{l}
i=1, \ldots ,\m ,\,
\\ k = 1, \ldots ,\n 
\end{array}} \; \;\; =\; A^{0} + A^{1}t+ \ldots + A^{M}t^{M} \]

\begin{rem}
It is easy to check that Alexander matrices obtained from presentations (\ref
{pr0}) and (\ref{presG}) coincide:
\[\gamma(\frac {\partial r_{i}}{\partial x_{k}}) =
 \gamma(\frac {\partial r_{i}}{\partial a_{k}}).\]
\end{rem}

From now on we will consider the Alexander matrix over the group ring 
$(\Z /p)(G)$
 instead of $\Z (G)$, so that the entries of $A^j ,\, j=1,\ldots ,M ,\;$
 belong to 
$\Z /p.$

\section{Spectral structure of the shift}

From \cite{SiWi 1} we know that the dynamical system $$(\Hom (K, Z/p ),   
\sigma _{x} ),\;\;
\sigma _{x}  (\rho) (a) = \rho( xax^{-1})   \;\;\; \forall a\in K,\; \rho \in
\Hom (K, Z/p )$$
is a shift of finite type. In the following proposition we show that it is
isomorphic to a subshift of a full shift satisfying a linear recurrent equation.

\begin{thm}\label{theor1}
The dynamical system $( \Hom (K, Z/p ), \sigma _{x}  )$
 is isomorphic to the space of bi-infinite sequences
$(\ldots Y_{-1}Y_{0}Y_{1}\ldots ),\;\;Y_{j}\in (Z/p)^{\n}$,
with the left shift in the space of sequences,
 that we denote by $\sigma $ , satisfying
\begin{equation}\label{rec00}
A^{0}Y_{j}+A^{1}Y_{1+j}+\cdots +A^{M}Y_{M+j}=0, \;\;\;j \in Z ,
\end{equation}
where $A^{0}, A^{1} \ldots A^{M}$ are the matrices $\m \times \n $  \:
 and $A^{0} +A^{1}t + \ldots A^{M}t^M$ is an Alexander matrix for $G$
\footnote{By definition, the Alexander matrix for any presentation of $G$ 
is an Alexander matrix for the group $G$.}. 
\end{thm}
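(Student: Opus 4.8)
The plan is to construct an explicit conjugacy $\Phi$ between the two systems directly from the $\Z$-dynamic presentation of $K$ recalled in Section 2. Given $\rho \in \Hom(K,\Z/p)$, I set
\[
Y_j = (\rho(a_{1j}),\ldots,\rho(a_{\n j})) \in (\Z/p)^{\n}, \qquad j \in \Z,
\]
viewed as a column vector, and define $\Phi(\rho) = (\ldots Y_{-1}Y_{0}Y_{1}\ldots)$. Since a homomorphism out of $K$ is determined by its values on the generators $a_{ij}$, the map $\Phi$ is injective; and since in the weak topology each coordinate $\rho(a_{ij})$ depends continuously on $\rho$, both $\Phi$ and its set-theoretic inverse are continuous. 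Thus, once I show that $\Phi$ is a bijection onto the solution space of (\ref{rec00}), it will automatically be a homeomorphism.

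First I would verify that $\Phi(\rho)$ satisfies (\ref{rec00}). Because $\Z/p$ is abelian, $\rho$ factors through the abelianization of $K$, so the value $\rho(r_{ij})$ depends only on the exponent sums of the generators occurring in the relator $r_{ij}$. By construction $r_{ij} = \sigma_{x}^{j} r_{i0}$ is obtained from $r_{i0}$ by adding $j$ to every second subindex, so the exponent sum of $a_{k,l+j}$ in $r_{ij}$ equals the exponent sum of $a_{kl}$ in $r_{i0}$, which by the free-derivative computation of Section 3 is exactly the entry $A^{l}_{ik}$. Since each $r_{ij}$ is trivial in $K$, this gives
\[
0 = \rho(r_{ij}) = \sum_{l=0}^{M}\sum_{k=1}^{\n} A^{l}_{ik}\,\rho(a_{k,l+j}) = \sum_{l=0}^{M}\bigl(A^{l}Y_{l+j}\bigr)_{i},
\]
i.e. the $i$-th coordinate of the recurrence (\ref{rec00}).

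To prove surjectivity I would reverse this computation: starting from any sequence $(Y_j)$ satisfying (\ref{rec00}), define a map on generators by $\rho(a_{ij}) = (Y_j)_i$. The displayed identity, read from right to left, says precisely that $\rho$ sends every relator $r_{ij}$ to $0$; as $K$ is presented by the $a_{ij}$ subject to the $r_{ij}$ and the target $\Z/p$ is abelian, $\rho$ extends to a well-defined homomorphism $K \to \Z/p$ with $\Phi(\rho) = (Y_j)$. Finally, equivariance is immediate from $x a_{ij} x^{-1} = a_{i,j+1}$, which yields $(\sigma_{x}\rho)(a_{ij}) = \rho(a_{i,j+1})$; hence $\Phi$ intertwines $\sigma_{x}$ with the left shift $\sigma$ on sequences.

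The main obstacle is the bookkeeping in the first step: correctly matching $\rho(r_{ij})$ with the linear form $\sum_{l} A^{l} Y_{l+j}$ requires tracking the index shift $r_{ij} = \sigma_{x}^{j} r_{i0}$ together with the identification of the exponent-sum coefficients with the matrices $A^{l}$ of Section 3. Once this translation between the group relators and the linear recurrence is in place, the bijectivity and the topological and equivariance statements are essentially formal.
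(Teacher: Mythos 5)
Your proposal is correct and follows essentially the same route as the paper: the paper likewise defines $Y_j$ by the coordinates $\rho(a_{1j}),\ldots,\rho(a_{\n j})$, uses that $\Z/p$ is abelian to reduce to the abelianization $\tilde K = K/[K,K]$ so that the relators $r_{ij}=\sigma_x^j r_{i0}$ become exactly the linear equations $A^0Y_j+\cdots+A^MY_{M+j}=0$, and identifies the coefficients (exponent sums of $a_{kl}$ in $r_{i0}$) with the entries of the Alexander matrix from the free-derivative computation. Your treatment of injectivity, surjectivity via von Dyck's theorem, continuity, and equivariance just makes explicit what the paper leaves implicit.
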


\begin{proof}

Let $\tilde{K} = K/[K,K].\;\;$ The abelian group $\tilde{K}$ has a presentation 
\begin{equation}\label{}
(a_{1j},\ldots , a_{\n j}| R_{1j}, \ldots ,R_{\m j}).
\end{equation}
\noindent
(We keep the same notations $a_{ij}$ for generators, and rewrite relations in
additive form).The space \(\Hom (K, \Z /p) \cong \, \Hom (\tilde
{K}, \Z /p)\) and is isomorphic
to the space of bi-infinite sequences of vectors $(\ldots Y_{-1}, Y_{0}, Y_{1},
\ldots) , \; Y_{j} \in (\Z /p)^{\n}$ ,\; via the isomorphism 
\[Y_{j} =
\left (
\begin{array}{c}
\rho(a_{1j}) \\
\vdots \\\rho(a_{\n j})
\end{array}
\right ) \; , \;\;j \in \Z ,\]
satisfying the system of recurrent equations, imposed by relations  
of $\tilde{K}$
\begin{equation}\label{ro}
\left \{ 
\begin {array}{c}
\rho(R_{1j})=0\\
\vdots\\\rho(R_{\m j})=0
\end{array}
\right. \mbox{ which is equivalent to};\;
\left \{ 
\begin{array}{c}
(\sigma _{x}^{j}\rho)(R_{10})=0\\
\vdots\\
(\sigma _{x}^{j}\rho)(R_{\n 0})=0
\end{array}
\right.  
\end{equation}
The system
\begin{equation}\label{ro0}
\left \{
\begin {array}{c}
\rho(R_{10})=0\\
\vdots\\\rho(R_{\m 0})=0
\end{array}
\right .    
\end {equation}
can be written in matrix form:
$$A^{0}Y_{0}+A^{1}Y_{1}+\cdots +A^{M}Y_{M}=0 ,$$ 
while the system (\ref{ro}) is equivalent to
$$A^{0}Y_{j}+A^{1}Y_{1+j}+\cdots +A^{M}Y_{M+j}=0, \;\;\;j \in \Z ,$$ 
where $A^{j}_{ik}$ is the coefficient at $a_{kj}$ in $R_{i0}$,
equal to the total degree of $a_{kj}$ in $r_{i0}$. 
So the matrices $A^{0}$,$A^{1}$,\ldots  $A^{M}$     are the same as we had
in previous section, i.e, $A^{0}+A^{1}t + \cdots +A^{M}t^{M}$
 is the Alexander matrix for
the presentations (\ref{pr0}) and (\ref{presG}).
\end{proof}
 
If we start with the Wirtinger presentation for $G$, we will get the Alexander
matrix  $\mathcal{A}(t) $ with entries $1-t ,\, t ,\,-1.$
We'll write it in the form $\mathcal{A}(t)= B-tA ,$ where $A$ and $B$ 
are the matrices $\n \times \n $
  consisting of $\;0, \, \pm 1. $
Equation (\ref{rec00}) then  takes form:
\begin{equation}\label{rec0}
BY_j-AY_{j+1}=0,\;\;\;j \in \Z.
\end{equation}
   
 It follows  from \cite{SiWi 2} that the space
 $\mathcal{V}$ 
of  bi-infinite sequences satisfying (\ref{rec0}) is finite; and the projection
$ \pi _{0} : (\ldots  Y_{-1}, Y_{0}, Y_{1} \ldots) \mapsto  Y_{0}$  is
injective. It means that $ \sigma$ and so $\sigma _{x}$ 
 is conjugate to an operator
 $T : V \rightarrow V;\;\;\; T(Y_{0})=Y_{1}$\;\;\;
$ Y_{0} \in V \subset (Z/p)^{\n} $  where $V \subset ( Z/p )^{\n} $ 
is the subspace
 (of dimension, say, $m$ ) of
 all possible $ Y_{0}$. Also $0 \notin \spec (T).$  
In other words, we have a commutative diagram:
\[
\begin{array}{ccc}
(\ldots Y_{-1}, Y_{0}, Y_{1}, \ldots) & \stackrel{\sigma}{\longmapsto}
&(\ldots Y_{0}, Y_{1}, Y_{2}, \ldots)\\
\downarrow \pi _{0} & & \downarrow  \pi _{0} \\\
Y_{0}&\stackrel {T} {\longmapsto} &Y_{1} 
\end{array}
\]

\noindent
We summarize the results of this section in the following theorem:
\begin{thm}\label{theor22}
The three dynamical systems are isomorphic:
$$ (\Hom (K, \Z /p), \sigma _x)\; \cong \;(\mathcal{V}, \sigma )\;  \cong\;
(V,\,T),$$       
where $\mathcal{V}$ is the space of bi-infinite sequences 
$ \ldots Y_{-1}, Y_0,Y_1,\ldots  $satisfying
equation 
\begin{equation}
BY_j-AY_{j+1}=0,\;\; j \in \Z;
\end{equation}
$ Y_j \in V \subset (\Z /p)^{\n} ,\;\; 
V = \pi _0 (\mathcal{V})$ and $T$ is 
 isomorphism of $V$ mapping $Y_j$ to $Y_{j+1}.$
\end{thm}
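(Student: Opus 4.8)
The plan is to assemble the statement from the two identifications already set up in this section and then record the linear structure of the terminal object $(V,T)$. First I would read off the isomorphism $(\Hom(K,\Z/p),\sigma_x)\cong(\mathcal V,\sigma)$ from Theorem~\ref{theor1}, which already presents $(\Hom(K,\Z/p),\sigma_x)$ as the left shift on the space of bi-infinite sequences satisfying the recurrence (\ref{rec00}). The only additional input is the choice of the Wirtinger presentation: its Alexander matrix has entries among $1-t,\,t,\,-1$, so it factors as $\mathcal A(t)=B-tA$ with $A,B$ matrices with entries in $\{0,\pm1\}$, and (\ref{rec00}) then collapses to the first-order recurrence $BY_j-AY_{j+1}=0$ of (\ref{rec0}). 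By definition $\mathcal V$ is the solution space of the latter, which gives the first isomorphism.

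Next I would build the conjugacy $(\mathcal V,\sigma)\cong(V,T)$ from the evaluation map $\pi_0:(\ldots Y_{-1},Y_0,Y_1\ldots)\mapsto Y_0$. By \cite{SiWi 2} the space $\mathcal V$ is finite and $\pi_0$ is injective, so $\pi_0$ is a bijection of $\mathcal V$ onto $V=\pi_0(\mathcal V)\subset(\Z/p)^{\n}$; setting $T:=\pi_0\circ\sigma\circ\pi_0^{-1}$, equivalently $T(Y_0)=Y_1$, the commutative square displayed above expresses exactly that $\pi_0$ intertwines $\sigma$ with $T$. To upgrade this to the asserted \emph{linear} isomorphism I would note that $\mathcal V$ is a $\Z/p$-vector space (the solution set of the homogeneous system (\ref{rec0})) and $\pi_0$ is linear, hence $V$ is a subspace and $\pi_0:\mathcal V\to V$ a linear isomorphism; since $\sigma$ is invertible on $\mathcal V$ (its inverse is the right shift, which also preserves (\ref{rec0})), the conjugate $T$ is a linear automorphism of $V$, so in particular $0\notin\spec(T)$.

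The one genuinely nontrivial ingredient is the injectivity of $\pi_0$, i.e. that a bi-infinite solution of $BY_j-AY_{j+1}=0$ is determined by its central value $Y_0$. This does not follow from invertibility of $A$ or $B$, which can easily fail for an Alexander matrix with entries in $\{0,\pm1\}$; it is precisely the finiteness of $\mathcal V$ imported from \cite{SiWi 2}. Granting that, the remainder is bookkeeping: transporting the recurrence through the Wirtinger specialization and tracking linear maps across the two identifications.
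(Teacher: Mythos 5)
Your proposal is correct and takes essentially the same route as the paper, which states Theorem~\ref{theor22} as a summary of the discussion immediately preceding it: Theorem~\ref{theor1}, the Wirtinger specialization $\mathcal{A}(t)=B-tA$ collapsing (\ref{rec00}) to the first-order recurrence (\ref{rec0}), the input from \cite{SiWi 2}, and the conjugation of $\sigma$ to $T$ via $\pi_0$. The one discrepancy is attribution: the paper imports \emph{both} the finiteness of $\mathcal{V}$ and the injectivity of $\pi_0$ directly from \cite{SiWi 2}, whereas you derive injectivity from finiteness without proof---an implication that is in fact true for this first-order system, but it requires an argument (e.g.\ if $Z\in\mathcal{V}$ is nonzero with $Z_0=0$, splicing $Z$ with the zero sequence at an index adjacent to a vanishing entry produces a nonzero solution supported on a half-line, whose shifts are infinitely many distinct elements of $\mathcal{V}$).
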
 
Now we will study the spectral structure of $\sigma _x \,(\sigma ,\,T )$
 using a convenient                                    
presentation.

\section {Jordan matrix for  $\sigma  _x$}

\noindent
Throughout the rest of the paper $E $ will denote the identity operator in the 
space under consideration, and, in case of a finite dimensional space, also 
the unit 
matrix  of corresponding size.  

\subsection {Eigenvectors and adjoint vectors of the full shift}
Denote the splitting field  for Alexander polynomial over $\Z /p $ by $F.$
Let $V$ be a vector space over  $F$, 
 and let $\sigma $ 
be the left shift in the space of bi-infinite sequences $V^{Z}$.
Recall the definitions: 

\noindent
$Y^{0} \in V^{Z}$ is  {\it an eigenvector} for $ \sigma $ with eigenvalue 
$t \in F $
if $(\sigma -tE)Y^{0}=0$

\noindent
$Y^0,  Y^{1}, \ldots ,  Y^{i} $ is {\it a sequence of adjoint vectors} for 
eigenvector $Y^{0},$ if

 $(\sigma  - tE)Y^{k} = Y^{k-1}\, , \;\;k=1, \ldots , i$

\noindent Let $C_{n}^{0} = 1, \;\;\;C_{n}^{k} = 
\frac{n (n-1) \cdots (n-k+1)}{k!} \;$ 
for all integer $n$ and positive integer $k$.
\begin{prop}

\noindent a) $Y^{0}$ is an eigenvector for  $\sigma $ with an eigenvalue $t$
 if and only if $Y^{0}=\{ t^{n}v_{0}\}_{n \in \Z }$ \;\; for some
$v_{0} \in V$.          

\noindent b) $ Y^0, Y^{1}, \ldots ,  Y^{i} $ 
is a sequence of adjoint vectors for 
eigenvector $Y^{0},$
iff there  are vectors $v_{0}, v_{1}, \ldots v_{i} \in V$ such that
\begin {equation} \label{eq5}
Y^{k} = \{ C_{n}^{0}t^{n}v_{k} +  C_{n}^{1}t^{n-1}v_{k-1} + \ldots + 
 C_{n}^{k}t^{n-k}v_{0}\}_{n \in \Z }
\end {equation}
\end{prop}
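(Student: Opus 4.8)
The plan is to read both parts of the Proposition as statements about solving the first-order linear recurrence encoded by $\sigma - tE$ on bi-infinite sequences, with the generalized binomial coefficients $C_n^k$ playing the role of the discrete analogue of the polynomial prefactors that appear for generalized eigenvectors in characteristic zero. Throughout I assume $t \neq 0$, consistent with the exclusion of the zero eigenvalue; the case $t=0$ yields only the trivial eigenvector and is set aside. For part a) I would simply unwind the equation: writing $(\sigma Y^0)_n = Y^0_{n+1}$, the condition $(\sigma - tE)Y^0 = 0$ reads $Y^0_{n+1} = t\,Y^0_n$ for every $n \in \Z$. This recurrence has the unique solution $Y^0_n = t^n Y^0_0$ (propagating both up and down from $n=0$, where the downward step needs $t\neq 0$), so setting $v_0 = Y^0_0$ gives $Y^0 = \{t^n v_0\}_{n \in \Z}$. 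Conversely any sequence of this shape solves the recurrence, so both implications are immediate.

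For part b) the defining relations $(\sigma - tE)Y^k = Y^{k-1}$ become the coupled recurrences
\[
Y^k_{n+1} - t\,Y^k_n = Y^{k-1}_n, \qquad n \in \Z,\quad k = 1, \ldots, i.
\]
The engine of the whole argument is Pascal's identity in the form $C_{n+1}^j - C_n^j = C_n^{j-1}$. For the \emph{if} direction I would substitute the proposed expression $Y^k_n = \sum_{j=0}^k C_n^j\, t^{n-j} v_{k-j}$ and compute
\[
Y^k_{n+1} - t\,Y^k_n = \sum_{j=0}^k \left( C_{n+1}^j - C_n^j \right) t^{n+1-j} v_{k-j} = \sum_{j=1}^k C_n^{j-1}\, t^{n+1-j} v_{k-j},
\]
where the $j=0$ term drops out; reindexing by $j' = j-1$ turns the right-hand side into $\sum_{j'=0}^{k-1} C_n^{j'}\, t^{n-j'} v_{(k-1)-j'} = Y^{k-1}_n$, exactly as required.

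For the \emph{only if} direction I would argue by induction on $k$, with $k=0$ supplied by part a). Assuming $Y^0, \ldots, Y^{k-1}$ already have the stated form with fixed vectors $v_0, \ldots, v_{k-1}$, the computation above shows that the truncated sum $\hat Y^k_n = \sum_{j=1}^k C_n^j\, t^{n-j} v_{k-j}$ is one particular solution of $(\sigma - tE)\hat Y^k = Y^{k-1}$. Any given solution $Y^k$ then satisfies $(\sigma - tE)(Y^k - \hat Y^k) = 0$, so by part a) there is a vector $v_k \in V$ with $(Y^k - \hat Y^k)_n = t^n v_k$. Hence $Y^k_n = t^n v_k + \sum_{j=1}^k C_n^j\, t^{n-j} v_{k-j} = \sum_{j=0}^k C_n^j\, t^{n-j} v_{k-j}$, which is the claimed form and closes the induction.

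The only genuine subtlety, and the point I would be most careful about, is that the $C_n^k$ are generalized binomial coefficients evaluated at all integers $n$, including negative ones, via the polynomial $\tfrac{n(n-1)\cdots(n-k+1)}{k!}$, while we work over $\Z/p$ (or its extension $F$). One might worry that dividing by $k!$ is illegitimate in characteristic $p$; but each $C_n^j$ is an integer before reduction (for negative $n$ it is $\pm\binom{m+j-1}{j}$), so it descends to a well-defined element of $F$, and the Pascal identity $C_{n+1}^j = C_n^j + C_n^{j-1}$ is a polynomial identity in $n$ valid over $\Z$, hence survives reduction mod $p$ intact. Thus no characteristic obstruction arises and the recurrence manipulations are legitimate over $F$; the remainder is bookkeeping with the binomial sums.
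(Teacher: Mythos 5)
Your proof is correct, and it takes a genuinely different route from the paper's in how the converse direction is handled. Both arguments rest on the same engine --- induction on $k$ together with Pascal's identity $C_{n}^{j}+C_{n}^{j-1}=C_{n+1}^{j}$ for the generalized binomial coefficients --- but the paper proves each step of the induction on $k$ by a \emph{two-sided induction on the index $n$}: it sets $v_{k}=Y^{k}_{0}$ and propagates the formula upward via $Y^{k}_{n+1}=tY^{k}_{n}+Y^{k-1}_{n}$ and downward via $Y^{k}_{-(n+1)}=t^{-1}\bigl(Y^{k}_{-n}-Y^{k-1}_{-(n+1)}\bigr)$, the downward half using the companion identity $C_{-n}^{i}+C_{-n-1}^{i-1}=C_{-n-1}^{i}$. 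You instead verify the \emph{if} direction by a single telescoping substitution valid for all $n\in\Z$ simultaneously, and you obtain the \emph{only if} direction from the affine structure of the solution set: solutions of $(\sigma-tE)Y^{k}=Y^{k-1}$ form a coset of $\ker(\sigma-tE)$, which part a) identifies as the sequences $\{t^{n}v\}_{n\in\Z}$, $v\in V$. What your organization buys is that you never touch the downward recursion with negative-index coefficients, which is the most delicate and error-prone part of the paper's computation (its displayed formulas there contain misprints); what the paper's buys is a purely hands-on, entry-by-entry argument that needs no appeal to the particular-solution-plus-kernel structure. Two of your side remarks supply details the paper leaves implicit and are worth keeping: that each $C_{n}^{j}$ is an integer (equal to $(-1)^{j}\binom{m+j-1}{j}$ for $n=-m<0$), so Pascal's identity descends to characteristic $p$ without any division by $k!$; and that $t\neq 0$ is genuinely needed --- in your argument already for part a), in the paper's for every downward step.
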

\begin{proof} 
\noindent a) Let $v_{0} = Y^{0}_{0}.$
     
 $$ ((\tilde {T} - tE)\,Y^{0})_{n}\; = \;Y^{0}_{n+1} - t\,Y^{0}_{n} = 0, \;\; \;
\forall n \in Z \;\;\; \Longleftrightarrow \;\;\;
 Y^{0}_{n} = t^nY^{0}_{0}, \;\;\; 
\forall n \in Z $$  by induction.
    
\noindent b)Let $v_{k} = Y_{0}^{k}.$

The proof goes by induction by $k$, and for each $k$ the step of  induction
is from $n$ to  $n+1$ and from $\,-n$ to $\,-(n+1 \!)$.

\noindent 1)\;$((\sigma -tE)Y^{k})_{n} = Y^{k-1}_{n} \; \Longleftrightarrow \;
 Y^{k}_{n+1} - t\,Y^{k}_{n} = Y^{k-1}_{n} \;$

$\Longleftrightarrow \; Y^{k}_{n+1} = tY^{k}_{n} + 
Y^{k-1}_{n} = $ $(  C_{n}^{0}\,t^{n+1}v_{k} +  C_{n}^{1}\,t^{n}v_{k-1} + \ldots + 
 C_{n}^{k}\,t^{n-k+1}v_{0}) + $
 $ +( C_{n}^{0}\,t^{n}v_{k-1} +  C_{n}^{1}\,t^{n-1}v_{k-2} + \ldots  + 
 C_{n}^{k-1}\,t^{n-k}v_{0}) $  

\noindent and b) follows from $C_{n}^{i} +C_{n}^{i-1} = C_{n+1}^{i} .$
     
\noindent 2)\;$((T-tE)Y^{k})_{-(n+1)} = Y^{k-1}_{-(n+1)} \;
 \Longleftrightarrow \;
 Y^{k}_{-n} - t\,Y^{k}_{-n-1} = Y^{k-1}_{-n-1} \;$

$\Longleftrightarrow \; Y^{k}_{-(n+1)} = t^{-1}Y^{k}_{-n} - 
t^{-1}Y^{k-1}_{-(n+1)} =
 $ $(  C_{-n}^{0}\,t^{-n-1}v_{k} +  C_{-n}^{1}\,t^{-n-2}v_{k-1} + \ldots + 
 C_{n}^{k}\,t^{-n-k-1}v_{0}) - $
 $ -( C_{-n-1}^{0}\,t^{-n-3}v_{k-1} +  C_{-n-1}^{1}\,t^{-n-3}v_{k-2} + \ldots  + 
 C_{-n-1}^{k-1}\,t^{-n-k-1}v_{0}) $  

\noindent and b) follows from $C_{-n}^{i} +C_{-n-1}^{i-1} = C_{-n-1}^{i} $ 

\end{proof}
      
\subsection {Eigenvectors and adjoint vectors for a pencil  of matrices}
\begin{defn}
Given a pencil of matrices $ { B-tA },\;\;t \in F \; ,$  we say that 
$v_{0} $ is
{\it an eigenvector} of this pencil with eigenvalue $t_{0}  $ if
$ (B-t_{0}A) \, v_{0}=0 \; , $
and we say that $ v_{0},v_{1}, \ldots v_{i} $ is {\it a  sequence of adjoint
vectors} for eigenvector $v_{0}$ if $(B-t_{0}A) \, v_{k} = A \, v_{k-1},\;\;
 k=1, \ldots i.$
\end {defn}

\begin{prop}
The sequence \; $ Y^{0},Y^{1},\ldots Y^{i} $\; of adjoint vectors to eigenvector
$Y_{0}$ with eigenvalue $t \neq 0$\; belongs to our subshift, i.e.,  satisfies 
$$
B\,Y_{n}^{k} - A\,Y_{n+1}^k \, =\, 0 \;\;\;\forall \, n \in \Z ,\;k\,=\,0, \ldots i
$$
iff the sequence $\{ v_{0}, \ldots , v_{i} \}\,=\,
\{ \pi _{0}\,Y^{0}, \ldots \pi _{0}\,Y^{i}\}\,=\, \{Y_{0}^{0},\,Y_{0}^{1} 
\, \ldots Y_{0}^{i}\}$ is a sequence of adjoint vectors to eigenvector
 $v_{0}$ with eigenvalue $t$\;with respect to the pencil $ B\,-\,tA $.
   
 \noindent In this case $\{v_{0}, \ldots v_{i} \}\;$ is also a sequence 
of adjoint vectors to the eigenvector $v_{0}$ \; with eigenvalue  $t$ 
for the operator $T$:
   
\noindent \;$(T\,-\,t\,E)\,v_{0}\,=\,0$
  
\noindent \;$(T\,-\,t\,E)\,v_{k}\,=\,v_{k-1}$
\end {prop}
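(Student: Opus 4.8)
The plan is to substitute the explicit form of the adjoint vectors, given by part b) of the preceding proposition, into the subshift recurrence $B\,Y_n^k - A\,Y_{n+1}^k = 0$ and to read off, coefficient by coefficient, the resulting conditions on $v_0,\ldots,v_i$. Recall that the adjoint sequence has the shape
\begin{equation*}
Y^k_n = \sum_{l=0}^{k} C_n^{l}\, t^{\,n-l}\, v_{k-l}, \qquad k = 0,\ldots,i,\ n \in \Z,
\end{equation*}
and observe that evaluating at $n=0$ gives $\pi_0 Y^k = Y_0^k = v_k$, since $C_0^0 = 1$ while $C_0^l = 0$ for $l\ge 1$. First I would compute $B\,Y_n^k$ and $A\,Y_{n+1}^k$ termwise; the only delicate point is the shift $n\mapsto n+1$ inside the second term, which I would absorb using Pascal's rule $C_{n+1}^{l} = C_n^{l} + C_n^{l-1}$ (with the convention $C_n^{-1}=0$).

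After applying Pascal's rule and reindexing the shifted sum, the recurrence for a fixed $k$ collapses to
\begin{equation*}
\sum_{l=0}^{k} C_n^{l}\, t^{\,n-l}\,(B-tA)\,v_{k-l} \;-\; \sum_{l=0}^{k-1} C_n^{l}\, t^{\,n-l}\, A\,v_{k-1-l} \;=\; 0 .
\end{equation*}
Since $t\neq 0$ I can divide through by $t^{\,n}$, leaving a linear combination of the functions $n\mapsto C_n^{l}$ with constant vector coefficients. Collecting the coefficient of $C_n^{l}$ produces $(B-tA)v_0$ at $l=k$, the expression $(B-tA)v_{k-l}-A\,v_{k-1-l}$ for $0\le l\le k-1$, and nothing else.

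The crux is then the linear independence of the functions $\{\,n\mapsto C_n^{l}\,\}_{l=0}^{k}$ over $F$, \emph{including} in characteristic $p$, and this is where I expect the only genuine subtlety to lie. I would settle it with the finite-difference identity $\Delta\,C_n^{l} = C_{n+1}^{l}-C_n^{l} = C_n^{l-1}$: applying $\Delta^{k}$ to any identically vanishing combination isolates the top coefficient, and downward induction kills the rest. The argument uses only integer relations among binomial coefficients, so it survives reduction mod $p$. Linear independence forces every collected coefficient to vanish, i.e. $(B-tA)v_0 = 0$ and $(B-tA)v_m = A\,v_{m-1}$ for $m=1,\ldots,k$. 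Taking $k=i$, with the lower levels subsumed, yields precisely the pencil adjoint-vector relations; and since each implication above is in fact an equivalence, the converse direction is read off the very same computation.

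Finally, for the statement about $T$ I would invoke the conjugacy $\pi_0\circ\sigma = T\circ\pi_0$ recorded in the commutative diagram before Theorem~\ref{theor22}. Applying $\pi_0$ to the defining relations $(\sigma - tE)Y^0 = 0$ and $(\sigma - tE)Y^k = Y^{k-1}$, and using $\pi_0 Y^k = v_k$, gives $(T-tE)v_0 = 0$ and $(T-tE)v_k = v_{k-1}$ immediately, with no further computation required.
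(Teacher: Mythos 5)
Your proof is correct, but it takes a genuinely different route from the paper's. The paper argues by induction on the length $i$ of the adjoint sequence: at level $k$ it substitutes the same explicit form $Y_n^k=\sum_{j=0}^{k}C_n^{j}t^{n-j}v_{k-j}$ and applies Pascal's rule exactly as you do, but then it feeds the inductive hypothesis $(B-tA)v_j=Av_{j-1}$, $j<k$, back into the sums; this makes the two sums telescope identically in $n$, leaving $B\,Y_n^k-A\,Y_{n+1}^k=t^n\left((B-tA)v_k-Av_{k-1}\right)$, so the level-$k$ recurrence is equivalent to the level-$k$ pencil relation and no independence statement is ever needed. You instead do the computation in one shot and must then know that the functions $n\mapsto C_n^{l}$, $l=0,\ldots,k$, are linearly independent over a field of characteristic $p$ --- which you correctly single out as the real subtlety and correctly dispatch with the difference operator $\Delta C_n^{l}=C_n^{l-1}$, an integer identity that survives reduction mod $p$. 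What your version buys: it is induction-free at the top level and gives a slightly stronger intermediate fact, namely that the subshift recurrence at the single level $k$ already forces \emph{all} the pencil relations at levels $0,\ldots,k$. What the paper's version buys: it never needs the independence lemma, since the cancellation happens identically rather than coefficient-by-coefficient. Your derivation of the final claim about $T$ from the conjugacy $\pi_0\circ\sigma=T\circ\pi_0$ is also a genuine addition: the paper's written proof stops at the pencil equivalence and leaves the $T$ statement implicit.
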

\begin {proof} is by induction by the length of sequences, $i$.
For $i=0  \;\;\; Y_{n}^{0} \,= \,t^{n}v_{0}$,\;\;\;and 
$$B \, Y_{n}^{0} \,- \,A \, Y_{n+1}^{0} \, = \,0 \;\;\;\forall \, n \in \Z 
 \;\;\;
\Longleftrightarrow \;\;\;(B \; - \,t_{0}A)\, v_{0} \,=\,0 $$ 
Suppose $\;(B \; - \,t_{0}A)\, v_{j} \,=\,A \, v_{j-1} $\;\;\;for $\,j\,=\,
1, \ldots k-1$ \; and \;$  Y_{n}^{k} \, = \, \sum_{j=0}^{k} C_{n}^{j}\,t^{n-j}\,
v_{k-j}$
Then
         
\noindent $ B \, Y_{n}^{k} \, - \, A \, Y_{n+1}^{k} \, = \, $
        
\noindent $= \,B \, \sum_{j=0}^{k} \, C_{n}^{j} \, t^{n-j} \,  v_{k-j} \, - \, 
A \,  \sum_{j=0}^{k} \, C_{n+1}^{j} \, t^{n+1-j} \,  v_{k-j} \,  = $
      
\noindent \,$ = \, B\,t^{n}\,v_{k} \,+
\,B \, \sum_{j=1}^{k} \, C_{n}^{j} \, t^{n-j} \,  v_{k-j} \,-
\,A\,t^{n+1}\,v_{k}\,-\,A \,  \sum_{j=1}^{k} \, (C_{n}^{j} \,+\,
C_{n}^{j-1})\, t^{n+1-j} \,  v_{k-j} \,  = $
$\,=\,(B-tA)\,t^{n}\,v_{k}\,+
\,(B-tA)\, \sum_{j=1}^{k} \, C_{n}^{j} \, t^{n-j} \,  v_{k-j} \,-
\,A\,\sum_{j=1}^{k} \, C_{n}^{j-1} \, t^{n+1-j} \,  v_{k-j} \,+\,$
$\,=(B-tA)\,t^{n}\,v_{k}\,+
\,A\, \sum_{j=1}^{k-1} \, C_{n}^{j} \, t^{n-j} \,  v_{k-j-1} \,-\,
A\, \sum_{s=0}^{k-1} \, C_{n}^{s} \, t^{n-s} \,  v_{k-s-1} \,=\,$
      
\noindent \; $(s=j-1)$
     
\noindent \; $=\,(B-tA)\,t^{n}\,v_{k}\,-\,A\,t^{n}\,v_{k-1}$
   
\noindent So\;\; $B\,Y_{n}^{k}\,-\,A\,Y_{n+1}^{k}\,=\,0$ \;\;\;iff\;\;\;
 $(B-tA)\,v_{k}\,=\,v_{k-1}$
  
\noindent The proof is complete.
\end {proof}

\subsection {Normal form for the Alexander matrix}
Since $\det (B\,-\, t A)$  is an Alexander polynomial,  
it  is not identically equal to $0$.  
So the pencil of matrices  $B\,-\, t A$ is strictly equivalent to its
normal quasi-diagonal  form (see \cite{Gant},Chapter XII):
 there are
 invertible matrices
 $P$ and $Q$ over $F$  
 such that
$$
B\,-\, t A \, =\, P(\tilde{B} \, -\, t \tilde{A}) \, Q ,
$$
where $\tilde{B} \, -\, t \tilde{A} $ is a  quasi-diagonal matrix
\begin {equation} \label{canon}
\{ N^{(u_{1})},N^{(u_{2})}, \ldots , N^{(u_{s})},
 J_{0} \, - \, t E, J \, - \, t E \}
\end {equation}
with $N^{(u)} \, = \, E^{(u)} \, - \, t H^{(u)}$,
    
\noindent $E^{(u)}$ is the unit matrix of dimension $u$,
      
\noindent $H^{(u)}$ is matrix of dimension $u$ with $1$'s on the first
 over-diagonal and all other elements equal to $0$,  
$J_{0}$ is a matrix in Jordan form with $0$ spectrum,
 and $J$ is a matrix in Jordan form with non-zero spectrum.
   
Since  $\tilde{B} \, -\, t \tilde{A} $ is   quasi-diagonal,
$F^{\n } \, = \, U_{1} \bigoplus U_{0} \bigoplus U $, where $ U_{1}, U_{0}, U $
 are
invariant under  $\tilde{B} \, -\, t \tilde{A} $,
 
\noindent $\left. \tilde{B} \, -\, t \tilde{A} \right| _{ U_{1}} $ has matrix
$\{ N^{(u_{1})},N^{(u_{2})}, \ldots , N^{(u_{s})} \}$
  
\noindent and
$\left. \tilde{B} \, -\, t \tilde{A} \right| _{U_{0}}  \, 
= \, J_{0} \, - \, t E,\;\;$
$\left. \tilde{B} \, -\, t \tilde{A} \right| _{U}  \, = \, J \, - \, t E.$
In particular $\left. \tilde {A} \right|_{U} \, = \, E.$

\begin{prop}
Let $ Q v_{0}  =  u_{0} \, , \;
 \ldots ,\; \; 
 Q v_{i}  =  u_{i}$

\noindent Then a) $ u_{0}$ is an eigenvector for
$ \left.\tilde{B} \, - \, t \tilde{A} \right| _{U}$ 
with eigenvalue $t_{0} \neq 0 : (\tilde{B} \, -\, t_{0} \tilde{A}) u_{0} = 0$
$\mbox{iff}\;\;
 v_{0}$ is an eigenvector for
 $B \, - \, t A $ 
with eigenvalue $ t_{0} ,\;\;\;  (B \, - \, t_{0} A) v_{0} = 0$
    
\noindent b) The sequence $ v_{0}, v_{1}, \ldots  v_{i}$
is the sequence of adjoint vectors to $v_{0}$ in the sense defined above
iff  $ u_{0}, u_{1}, \ldots  u_{i} \in U $
is  the sequence of adjoint vectors to $u_{0}$ in the regular sense.
\end{prop}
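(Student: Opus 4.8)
The plan is to push everything through the strict equivalence $B-tA=P(\tilde B-t\tilde A)Q$ and then exploit the block structure of the Weierstrass form. First I would extract the two scalar consequences of this polynomial identity: matching the coefficients of $t^{0}$ and $t^{1}$ gives $B=P\tilde B Q$ and $A=P\tilde A Q$. Consequently, for any vector $v$ one has $(B-t_{0}A)v=P(\tilde B-t_{0}\tilde A)Qv$ and $Av=P\tilde A Qv$, so—writing $u=Qv$ and using that $P$ is invertible—the pencil relations for $B-tA$ in the $v$-variables are equivalent to the corresponding relations for $\tilde B-t\tilde A$ in the $u$-variables. This reduces the whole statement to a computation inside the quasi-diagonal form.

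The second ingredient is the observation that for $t_{0}\neq 0$ the restriction $(\tilde B-t_{0}\tilde A)|_{U_{1}\oplus U_{0}}$ is invertible. Indeed, on $U_{1}$ the operator is block-diagonal with blocks $N^{(u)}=E^{(u)}-t_{0}H^{(u)}$, each upper-triangular with $1$'s on the diagonal and hence of determinant $1$; and on $U_{0}$ it is $J_{0}-t_{0}E$, whose spectrum is $\{-t_{0}\}$, not containing $0$. By contrast, on $U$ the operator is $J-t_{0}E$, and here $\tilde A|_{U}=E$, so the pencil relations collapse to ordinary Jordan-chain relations for the operator $J$.

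For part a), I substitute as above: $(B-t_{0}A)v_{0}=0$ holds iff $(\tilde B-t_{0}\tilde A)u_{0}=0$. Decomposing $u_{0}$ along $U_{1}\oplus U_{0}\oplus U$ and invoking the invertibility of the first two blocks, the vanishing forces the $U_{1}$- and $U_{0}$-components of $u_{0}$ to be zero; thus $u_{0}\in U$ and $(J-t_{0}E)u_{0}=0$, which is exactly the statement that $u_{0}$ is an eigenvector of $\tilde B-t\tilde A|_{U}$ with eigenvalue $t_{0}$. The converse is immediate by extending such a $u_{0}$ by zero on $U_{1}\oplus U_{0}$ and translating back through $P$ and $Q$. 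For part b) I argue by induction on $k$, the case $k=0$ being part a). Assuming $u_{0},\dots,u_{k-1}\in U$ already form a Jordan chain for $J$, the substitution turns $(B-t_{0}A)v_{k}=Av_{k-1}$ into $(\tilde B-t_{0}\tilde A)u_{k}=\tilde A u_{k-1}=u_{k-1}\in U$, using $\tilde A|_{U}=E$. Projecting onto $U_{1}$ and $U_{0}$ and again using invertibility of those blocks kills the $U_{1}$- and $U_{0}$-components of $u_{k}$, so $u_{k}\in U$, and the $U$-component reads $(J-t_{0}E)u_{k}=u_{k-1}$, the regular adjoint relation; the reverse implication follows by extending a Jordan chain of $J$ by zero and translating back.

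The one genuinely delicate point—and the reason the hypothesis $t_{0}\neq 0$ is indispensable—is showing that the adjoint vectors $u_{k}$ cannot leak into the components $U_{1}$ or $U_{0}$. This is precisely where the invertibility of $(\tilde B-t_{0}\tilde A)|_{U_{1}\oplus U_{0}}$ enters, and it would fail for $t_{0}=0$ on account of the nilpotent block $J_{0}$, so some care is needed to flag that $0\notin\spec(T)$ is what legitimizes discarding the $U_{0}$ summand throughout.
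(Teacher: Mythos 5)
Your proof is correct and follows essentially the same route as the paper: transport everything through the strict equivalence $B-tA=P(\tilde B-t\tilde A)Q$ (so that pencil relations in the $v$'s become pencil relations in the $u$'s) and then exploit the quasi-diagonal structure together with $\tilde A|_U=E$. In fact your write-up is more complete than the paper's at the one delicate point: the paper justifies the containment $u_k\in U$ by saying only that $U$ is invariant under $\tilde B-t\tilde A$, which by itself is not sufficient (invariance controls images, not preimages). Your argument supplies the missing ingredient — that for $t_0\neq 0$ the restriction of $\tilde B-t_0\tilde A$ to $U_1\oplus U_0$ is invertible (determinant $1$ on each block $N^{(u)}$, spectrum $\{-t_0\}$ on the nilpotent block), so the $U_1$- and $U_0$-components of $u_0$ and of each $u_k$ are forced to vanish — and it correctly identifies this as the place where the hypothesis $t_0\neq 0$ is used, since the nilpotent block $J_0$ would break the argument at $t_0=0$.
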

\begin{proof}
a) is true since $(B \, - \, t_{0} A) v_{0} = 
 P(\tilde{B} \, - \, t \tilde{A}) Q v_{0} = 
 P(\tilde{B} \, - \, t \tilde{A}) u_{0} $\;\;\; and $\det P \neq 0.$
  
\noindent b)  $(B \, - \, t_{0} A) v_{k} = A v_{k-1} \Longleftrightarrow $
\[ \Longleftrightarrow 
P(\tilde{B} \, - \, t_{0} \tilde{A}) Q v_{k} = P \tilde{A} Q v_{k-1} 
\Longleftrightarrow (\tilde{B} \, - \, t_{0} \tilde{A}) u_{k} = 
\tilde{A} u_{k-1}
\]
By induction, we assume that $ u_{k-1} \in U $, then $ \tilde{A} 
u_{k-1} = u_{k-1}$ and $ u_{k} \in U $,  since $U$ is invariant under
$ \tilde{B} \, - \, t \tilde{A}$.
\end{proof}
\begin {thm}\label{theor2}
 Matrix $J$ in the canonical form of the  Alexander matrix defined above
 is the Jordan matrix for operator $T$, and so for $\sigma _{x}$. 
\end {thm}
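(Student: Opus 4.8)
The plan is to show that the invertible matrix $Q$ from the strict equivalence $B - tA = P(\tilde{B} - t\tilde{A})Q$ restricts to an isomorphism $V \to U$ which conjugates $T$ to $J$; since $J$ is already in Jordan form, this identifies it as the Jordan matrix of $T$. Recall that a Jordan matrix is determined, up to permutation of blocks, by its eigenvalues together with the lengths of the maximal chains of adjoint vectors attached to each eigenvalue. By Theorem \ref{theor22} we have $0 \notin \spec(T)$, so a Jordan basis of $V$ for $T$ splits into maximal chains $v_0, \ldots, v_i$ with $(T - tE)v_0 = 0$ and $(T - tE)v_k = v_{k-1}$, all with eigenvalue $t \neq 0$. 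It therefore suffices to match these chains, eigenvalue by eigenvalue and length by length, with the Jordan chains of $J$.

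First I would lift a given $T$-chain into the subshift. By Theorem \ref{theor22} the projection $\pi_0 \colon \mathcal{V} \to V$ is an isomorphism intertwining $\sigma$ and $T$, so each $v_k$ lifts uniquely to $Y^k \in \mathcal{V}$, and the relations $(T - tE)v_k = v_{k-1}$ pull back to $(\sigma - tE)Y^k = Y^{k-1}$; thus $Y^0, \ldots, Y^i$ is a sequence of adjoint vectors for $\sigma$ inside the subshift, with eigenvalue $t \neq 0$. By the Proposition comparing adjoint vectors of the pencil $B - tA$ with those of the subshift, the projected vectors $v_k = \pi_0 Y^k = Y^k_0$ then form a sequence of adjoint vectors, with the same eigenvalue $t$, for the pencil $B - tA$. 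Pushing this through the equivalence by means of the subsequent Proposition, the vectors $u_k := Q v_k$ form a sequence of adjoint vectors in the regular sense for the restricted pencil $\tilde{B} - t\tilde{A}$ on $U$; since $\tilde{A}|_{U} = E$, this pencil equals $J - tE$, so the condition reads $(J - tE)u_0 = 0$ and $(J - tE)u_k = u_{k-1}$, i.e. $u_0, \ldots, u_i$ is a Jordan chain of $J$ of the same eigenvalue and length, lying in $U$.

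To conclude I would establish completeness in both directions. The assignment $v_k \mapsto u_k = Q v_k$ is the restriction of the invertible matrix $Q$, hence injective and chain-preserving. Running the two Propositions backward, every Jordan chain of $J$ (all eigenvalues nonzero by the definition of the block $J$) pulls back under $Q^{-1}$ to a pencil-chain and then, via $\pi_0^{-1}$, to a $T$-chain in $V$; hence $Q$ carries a full Jordan basis of $V$ for $T$ onto a full Jordan basis of $U$ for $J$, so $Q(V) = U$ and $\dim V = \dim U$. On chain vectors one computes $QTv_k = Q(t v_k + v_{k-1}) = t u_k + u_{k-1} = J u_k = JQv_k$, and since these vectors span $V$ this gives $QT = JQ$, that is $J = Q T Q^{-1}$ on $U$. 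As $J$ is in Jordan form this exhibits it as the Jordan matrix of $T$, and since $(V,T) \cong (\Hom(K,\Z/p), \sigma_x)$ by Theorem \ref{theor22}, $J$ is equally the Jordan matrix of $\sigma_x$.

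The main obstacle is precisely this completeness step: I must ensure the correspondence matches entire Jordan bases, not merely individual chains, and in particular that the singular blocks $N^{(u_j)}$ (the $U_1$ part, eigenvalue at infinity) and the nilpotent block $J_0$ (the $U_0$ part, eigenvalue $0$) contribute nothing to $T$. Vanishing of the $U_0$ contribution is exactly the statement $0 \notin \spec(T)$; vanishing of the $U_1$ contribution has to be extracted from the finiteness of the subshift and the injectivity of $\pi_0$ taken from \cite{SiWi 2}, which force every element of $\mathcal{V}$ to be a genuine bi-infinite solution and hence to be spanned by the finite, nonzero-eigenvalue part $U$ of the pencil. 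Verifying $Q(V) = U$ rigorously — equivalently the dimension count $\dim V = \dim U$ — is the place where the argument must be made watertight.
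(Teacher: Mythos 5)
Your proposal is correct and follows essentially the same route as the paper: both use the two preceding propositions to set up a one-to-one, eigenvalue- and length-preserving correspondence between Jordan chains of $T$ (lifted through $\pi_0$ to the subshift, then projected to pencil chains, then pushed through $Q$ using $\tilde{A}|_U = E$) and Jordan chains of $J$, with maximal chains giving Jordan blocks. The only difference is one of explicitness: the paper states the chain correspondence in two sentences, while you additionally spell out the conjugation $QT = JQ$ and the completeness/dimension argument ($Q(V)=U$, with the $U_1$ and $U_0$ blocks contributing nothing because eigenvalues of $T$ are nonzero and finite), a point the paper leaves implicit.
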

\begin {proof}

We have one-to-one correspondence between sequences of adjoint vectors 
$u_{0},u_{1}, \ldots u_{k}$ for $J$ and $v_{0}, v_{1}, \ldots v_{k}$ for $T$.
To each maximal sequence corresponds a Jordan block in that basis.
\end{proof}

In the following  section we remind to the reader how to reconstruct 
the Jordan matrix $J$ 
if we know the invariant polynomials for  Alexander matrix.

\section {Invariant polynomials of the Alexander matrix}

Recall that for a polynomial matrix $ m \times n,$ 
$D_{k}(t),\; k=1, \ldots n ,$ denotes the maximal common divisor of all
minors of order $k$ . So for $\mathcal{A}(t), \;\;
D_{n}(t) = \det \mathcal {A}(t)$ is the Alexander polynomial of the knot.
These polynomials are the same for original matrix $\mathcal{A}(t)$ and 
 its normal form (\ref{canon}). 
Up to multiplication by $t^{s}, s \in \Z ,$  
they are the same as polynomials for matrix
$J-tE. $ 
Note that $ D_{k+1}$ is divisible by $D_{k}$ .
The  invariant polynomials for a polynomial matrix are defined as 
\[
\begin{array}{c}
 i_{1}(t) = \frac {D_{n}(t)}{D_{n-1}(t)} \\
\vdots \\
 i_{k}(t) = \frac {D_{n-k+1}(t)}{D_{n-k}(t)}
\end{array}
\]

It is easy to see that invariant polynomials for matrix $J - t E$, where $J$ 
is  a Jordan matrix, are:
\begin{equation}
 i_{1}(t) = \frac {D_{n}(t)}{D_{n-1}(t)} = (\lambda_{1} - t)^{a_{1}}\;
(\lambda_{2} - t)^{b_{1}} \ldots  (\lambda_{k} - t)^{s_{1}}
\end{equation}
where $\lambda _{1},\;\lambda _{2}, \ldots \lambda _{k} $ are all distinct
eigenvalues of $J$,

\noindent $a_{1}$ is the size of maximal Jordan cell with
diagonal element $\lambda _{1}, $ 

\noindent $b_{1}$ is the size of maximal Jordan
cell with diagonal element $\lambda _{2},$

$\vdots$

\noindent $s_{1}$ is the size of maximal Jordan cell with
diagonal element $\lambda _{k}, $ 
( since for a Jordan cell $(( \lambda - t) E + I_{1}), 
D_{k} = ( \lambda - t)^{k},$ and  $D_{k-1} = 1 $).
Similarly
\begin {equation}
 \frac {D_{n}(t)}{D_{n-2}(t)} = (\lambda_{1} - t)^{a_{1}+a_{2}}\;
(\lambda_{2} - t)^{b_{1}+b_{2}} \ldots  (\lambda_{k} - t)^{s_{1}+s_{2}}
\end {equation}
where $a_{2},\; b_{2}, \ldots  s_{2}$ are the second maximal sizes 
of corresponding Jordan cells.
So
\begin {equation}
 i_{2}(t) = \frac {D_{n-1}(t)}{D_{n-2}(t)} = (\lambda_{1} - t)^{a_{2}}\;
(\lambda_{2} - t)^{b_{2}} \ldots  (\lambda_{k} - t)^{s_{2}}
\end {equation}
So,
to reconstruct matrix $J$ 
 from invariant
polynomials of $\mathcal{A}(t)$, we add a Jordan cell of size $k$ with diagonal
element $\lambda $ for each root $\lambda \neq 0 $ of multiplicity $k$ for each
invariant polynomial.

\section {The least common multiple of periods of orbits 
of $(\Hom(K, \Z  /p) ,  \sigma  _x )$}

The following theorem is an immediate corollary of Theorem (\ref{theor2}).
\begin{thm}
The least common multiple of periods of orbits of dynamical system 
$(\Hom(K, \Z  /p),  \sigma  _x)$ is equal to the order of $J$ 
as an element of  finite group $GL(m,F)$, i.e. to the minimal integer $N > 0$
such that $J^N = E ;$ where $J$ is a Jordan matrix with non-zero spectrum
from the normal form of Alexander matrix.
\end{thm}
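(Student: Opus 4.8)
The plan is to reduce the statement entirely to Theorem~\ref{theor2}, which identifies $J$ (the non-zero-spectrum Jordan block from the normal form of the Alexander matrix) as the Jordan matrix of the operator $T$, and hence of $\sigma_x$ acting on the finite space $\Hom(K,\Z/p)$. Since by Theorem~\ref{theor22} the three dynamical systems $(\Hom(K,\Z/p),\sigma_x)$, $(\mathcal V,\sigma)$ and $(V,T)$ are all isomorphic, and since $0\notin\spec(T)$, the operator $T$ is an invertible linear map on the finite $F$-vector space $V$, and its conjugacy class over $F$ is recorded exactly by $J$. Every orbit of the dynamical system is then an orbit of $T$ on $V$, and the length of the orbit through $Y_0\in V$ is the minimal $q>0$ with $T^q Y_0 = Y_0$.

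First I would observe that a single orbit has period $q$ precisely when $Y_0$ lies in $\ker(T^q-E)$ but in no smaller such kernel; consequently the least common multiple of all orbit periods is the minimal $N>0$ for which $\ker(T^N-E)=V$, i.e.\ the minimal $N>0$ with $T^N=E$. This is by definition the (multiplicative) order of $T$ as an element of the finite group $GL(V)\cong GL(m,F)$, where $m=\dim_F V$. Here finiteness is essential and is guaranteed: $GL(m,F)$ is a finite group because $F$ is a finite field (the splitting field of the Alexander polynomial over $\Z/p$), so $T$ has finite order and the $\operatorname{lcm}$ in question is well defined.

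Next I would transport this order from $T$ to $J$. By Theorem~\ref{theor2} there is an invertible matrix $S$ over $F$ with $J = S T S^{-1}$, so $J^N = S\,T^N S^{-1}$ and therefore $J^N=E$ if and only if $T^N=E$. Thus $T$ and $J$ have exactly the same order in $GL(m,F)$, and the least common multiple of the orbit periods equals the order of $J$, which is the minimal integer $N>0$ with $J^N=E$. This is precisely the asserted formula.

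The only genuine point to check — and the place I expect the mild subtlety to sit — is the first reduction: that the $\operatorname{lcm}$ of the \emph{orbit} periods coincides with the order of the \emph{linear map}. This requires knowing that the orbits in question are exactly the $T$-orbits on all of $V$ (so that no periodicity is lost), which is supplied by the isomorphism of systems in Theorem~\ref{theor22} together with the injectivity of $\pi_0$ noted there; and it requires that $T$ be genuinely invertible with no zero eigenvalue, so that the fixed point $Y_0=0$ and every other point genuinely return under a power of $T$ rather than being merely eventually periodic. Both facts are already established ($0\notin\spec(T)$), so once they are invoked the argument is immediate and the remaining steps are the routine conjugation-invariance of multiplicative order recorded above.
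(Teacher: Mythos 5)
Your proposal is correct and follows the same route as the paper: the paper simply declares the theorem an immediate corollary of Theorem~\ref{theor2} (that $J$ is the Jordan matrix of $T$, hence of $\sigma_x$), and your write-up supplies exactly the routine details implicit in that remark --- that the lcm of orbit periods of the invertible map $T$ on the finite space $V$ is the order of $T$ in $GL(m,F)$, and that this order is conjugation-invariant, hence equal to the order of $J$. Nothing more is needed, and your attention to invertibility ($0\notin\spec(T)$) and finiteness of $F$ correctly identifies the only hypotheses that make the reduction work.
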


\begin{lem}\label{lemma}
Let $n \;$ be any integer number and $k\;$ be an integer such that 
$k > 1.$

\noindent
$ C_{n}^{1}, \ldots  C_{n}^{k-1}$ are divisible by $ p $ iff $n$ is divisible
by $p^r \;,$ where  $p^{r-1} < k \leq  p^{r}.$
\end {lem}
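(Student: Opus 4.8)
The plan is to reduce the divisibility of the whole block $C_{n}^{1},\dots,C_{n}^{k-1}$ to the behavior of a single binomial coefficient, and to exploit the fact that the first index at which $C_{n}^{j}$ fails to vanish mod $p$ is a power of $p$. Write $v_p(n)$ for the exact power of $p$ dividing $n$ (the $p$-adic valuation). The key elementary observation is a periodicity: since we only ever look at $j\le k-1<p^{r}$, one has $C_{n+p^{r}}^{\,j}\equiv C_{n}^{\,j}\pmod p$ for $0\le j<p^{r}$. This follows from $(1+x)^{p^{r}}\equiv 1+x^{p^{r}}\pmod p$, whence $(1+x)^{\,n+p^{r}}\equiv (1+x)^{n}(1+x^{p^{r}})$ in $(\Z/p)[[x]]$ (using that $1+x$ is invertible, so the identity makes sense even for negative $n$); comparing coefficients of $x^{j}$ with $j<p^{r}$ gives the claim because $C_{n}^{\,j-p^{r}}=0$ for $j<p^{r}$. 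This periodicity is exactly the device that lets me handle \emph{all} integers $n$, positive and negative, so I may replace $n$ by its nonnegative residue $n':=n\bmod p^{r}\in\{0,\dots,p^{r}-1\}$. Note $p^{r}\mid n \iff n'=0$, and when $n'\neq 0$ we have $v_p(n)=v_p(n')<r$.

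First I would record the trivial number-theoretic equivalence coming from $p^{r-1}<k\le p^{r}$: setting $s=v_p(n)$, we have $p^{s}\ge k\iff s\ge r\iff p^{r}\mid n$, while $s\le r-1$ forces $p^{s}\le p^{r-1}<k$. So the two cases to treat are $p^{r}\mid n$ and $p^{r}\nmid n$, and in the second case $p^{s}\le k-1$.

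In the case $p^{r}\mid n$ one has $n'=0$, so by periodicity $C_{n}^{\,j}\equiv C_{0}^{\,j}=\binom{0}{j}=0\pmod p$ for all $1\le j\le k-1\le p^{r}-1$; thus every $C_{n}^{j}$ is divisible by $p$, as required (this direction needs no Lucas/Kummer input at all). In the case $p^{r}\nmid n$, put $s=v_p(n)=v_p(n')\le r-1$, so $j_{0}:=p^{s}$ satisfies $1\le j_{0}\le p^{r-1}<k$, i.e. $j_{0}\in\{1,\dots,k-1\}$. By periodicity $C_{n}^{\,j_{0}}\equiv \binom{n'}{p^{s}}\pmod p$, and I would invoke Lucas' theorem: the base-$p$ digit of $n'$ in position $s$ is $n'/p^{s}\bmod p\neq 0$, while $p^{s}$ has a single digit $1$ in position $s$ and $0$ elsewhere, so digitwise the digits of $p^{s}$ do not exceed those of $n'$ and $\binom{n'}{p^{s}}\equiv (n'/p^{s}\bmod p)\not\equiv 0\pmod p$. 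Hence $C_{n}^{\,j_{0}}\not\equiv 0$ with $j_{0}\le k-1$, so the block is \emph{not} entirely divisible by $p$. Combining the two cases proves the lemma. (Equivalently, Kummer's theorem shows no carry occurs in adding $p^{s}$ and $n'-p^{s}$ in base $p$, giving $v_p\binom{n'}{p^{s}}=0$.)

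The main obstacle is conceptual rather than computational: the lemma allows $n$ to be an arbitrary integer, so the standard base-$p$ digit arguments (Lucas, Kummer) do not apply to $n$ directly, and the periodicity reduction to the nonnegative residue $n'$ modulo $p^{r}$ is what must be set up carefully and justified for negative $n$ via formal power series. A secondary point to get right, which explains the shape of the statement, is that the entire interval $p^{r-1}<k\le p^{r}$ yields the single condition $p^{r}\mid n$: this is precisely because the first index $j$ with $C_{n}^{j}\not\equiv 0$ equals $p^{\,v_p(n)}$, a power of $p$, so it can never fall strictly between consecutive powers of $p$, making the answer insensitive to where $k$ lies in that interval.
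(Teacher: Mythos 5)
Your proof is correct, but it takes a genuinely different route from the paper's. The paper works directly with the defining ratio: for the key step it writes $C_{n}^{p^{s-1}}$ as a product of $p^{s-1}$ consecutive integers divided by $(p^{s-1})!$, and observes that the powers of $p$ in numerator and denominator match exactly unless some numerator factor is divisible by $p^{s}$ --- and when $p^{s-1}\mid n$ the only candidate for that is $n$ itself. Iterating this from $s=1$ up to $s=r$ (legitimate because $p^{0},p^{1},\dots ,p^{r-1}$ all lie among the indices $1,\dots ,k-1$, since $p^{r-1}\le k-1$) gives the implication \lq\lq all $C_{n}^{j}$ divisible $\Rightarrow p^{r}\mid n$\rq\rq ; the converse implication is then only asserted, not argued. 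You instead reduce to the nonnegative residue $n'=n\bmod p^{r}$ via the power-series periodicity $C_{n+p^{r}}^{j}\equiv C_{n}^{j}\pmod p$ for $j<p^{r}$, after which one direction becomes the triviality $C_{0}^{j}=0$ and the other is Lucas' theorem applied at the single index $j_{0}=p^{s}$, where $p^{s}$ is the exact power of $p$ dividing $n$. Your route buys two things the paper does not make explicit: an actual proof of the converse direction (the one the paper states without argument), and a structural explanation of why only the interval $(p^{r-1},p^{r}]$ containing $k$ matters --- the first index at which $C_{n}^{j}$ fails to vanish mod $p$ is always a power of $p$, so it cannot fall strictly between consecutive powers. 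The paper's route, in exchange, is more elementary: it needs neither Lucas' theorem nor formal power series, and its consecutive-factor counting applies verbatim to negative $n$, which is precisely the point where you had to install the periodicity device before any base-$p$ digit argument could be used.
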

\begin{proof}
If $k=2, \;$ then $r=1 \;$ and 
 $\;C^1_n $ is divisible by $p \;
\Longleftrightarrow \;n \; $ is divisible by $ p.$ 
We state that $n$ is divisible by $p^{s-1}$ and   
 $ C_{n}^{p^{s-1}}$ is divisible by $p$ iff $n$ is divisible by $p^{s}.$
Indeed,
$
C_{n}^{p^{s-1}} = \frac {n(n-1) \cdots (n-p^{s-1}+1)}{1 \cdots p^{s-1}}
$,
and among $p^{s-1}$ factors in the numerator there must be one divisible
by $p^{s}$ (otherwise the sum of exponents of $p$ in the numerator and
 denominator is the same), and it is $n$ , since $n$ is the only factor
divisible by $p^{s-1}.$ 
So $C_{n}^{1}, C_{n}^{2}, \ldots C_{n}^{k-1}$
 are divisible by $p$ and $k > p^{r-1} $ 
(which is equivalent to $k-1 \geq p^{r-1}$) implies  $n$ is divisible by 
$p^{r}.$ 
 And opposite, $n$ is divisible by $p^{r} \mbox{and} \;  k \leq  p^{r}\;$
(which is equivalent to $k-1 < p^r $)
implies $\;  C_{n}^{1}, C_{n}^{2}, \ldots C_{n}^{k-1}$ are divisible
by $p$ . 
\end{proof}

\begin {thm}\label{order}

Let $J$  be a matrix in Jordan form: 
$$
J= \{ \lambda _{1}E^{(k_{1})}+H^{(k_{1})}, \; \ldots \;
 \lambda _{l}E^{(k_{l})}+H^{(k_{l})} \}
$$ 
where $\lambda _{i} \neq 0$
are not necessary distinct; $ \lambda _{i}$ belong to a field $F$ 
of characteristic $p$.

\noindent
For each Jordan cell with eigenvalue $\lambda _{i} $ 
take 
 $n_{i} = lcm (d_{i},p^{r_{i}})$ 
where $d_{i}$ is the order of $\lambda _{i}$ and 
$p^{r_{i}-1} < k_{i} \leq p^{r_{i}}$ . Then the order of $J$ 
is $lcm(n_{i}|1 \leq i \leq l).$
\end {thm}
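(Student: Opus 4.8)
The plan is to reduce the order of $J$ to that of a single Jordan cell, and then to read off a cell's order from the binomial expansion of its powers together with Lemma~\ref{lemma}. Since $J$ is block-diagonal, $J^N = E$ holds if and only if $C_i^N = E^{(k_i)}$ for every cell $C_i = \lambda_i E^{(k_i)} + H^{(k_i)}$; hence the order of $J$ is the least common multiple of the orders of the individual cells, and it suffices to prove that the order of a single cell $C = \lambda E^{(k)} + H^{(k)}$ equals $n = lcm(d, p^r)$, where $d$ is the multiplicative order of $\lambda$ and $p^{r-1} < k \le p^r$.

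First I would expand $C^N$. Because the scalar matrix $\lambda E^{(k)}$ commutes with $H^{(k)}$, and $(H^{(k)})^j = 0$ for $j \ge k$, the binomial theorem gives
\[
C^N = \sum_{j=0}^{k-1} C_N^j\, \lambda^{N-j}\, (H^{(k)})^j .
\]
The key structural point is that the matrices $(H^{(k)})^0 = E^{(k)}, (H^{(k)})^1, \ldots, (H^{(k)})^{k-1}$ are linearly independent, since $(H^{(k)})^j$ is supported exactly on the $j$-th over-diagonal. Therefore $C^N = E^{(k)}$ forces the coefficient of $E^{(k)}$ to be $1$ and all other coefficients to vanish, that is,
\[
\lambda^N = 1 \quad\text{and}\quad C_N^j\,\lambda^{N-j} = 0 \ \ (j = 1, \ldots, k-1).
\]
Since $\lambda \neq 0$, the second family is equivalent to $C_N^1, \ldots, C_N^{k-1}$ being divisible by $p$.

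Next I would translate these two conditions into divisibility constraints on $N$. The condition $\lambda^N = 1$ is exactly $d \mid N$. By Lemma~\ref{lemma} (applied with $k > 1$; the case $k = 1$ is trivial, giving $r = 0$ and no binomial conditions), the divisibility of $C_N^1, \ldots, C_N^{k-1}$ by $p$ is equivalent to $p^r \mid N$. Hence $C^N = E^{(k)}$ if and only if both $d \mid N$ and $p^r \mid N$, i.e.\ $lcm(d, p^r) \mid N$, so the minimal positive such $N$ is $n = lcm(d, p^r)$. Reassembling the cells, the order of $J$ is $lcm(n_i \mid 1 \le i \le l)$, as claimed.

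I expect the only genuine content beyond bookkeeping to be the decoupling step: invoking the linear independence of the powers of $H^{(k)}$ to split the single matrix equation $C^N = E^{(k)}$ into the scalar condition $\lambda^N = 1$ and the $p$-divisibility of the binomial coefficients. Once that separation is in place, Lemma~\ref{lemma} supplies the remaining arithmetic and the block-diagonal reduction is immediate.
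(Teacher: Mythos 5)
Your proof is correct and takes essentially the same route as the paper's: expand the power of each Jordan cell binomially, observe that $C^N = E^{(k)}$ is equivalent to $\lambda^N = 1$ together with $p \mid C_N^j$ for $j = 1, \ldots, k-1$, and then invoke Lemma~\ref{lemma} to get the minimal $N$ as $lcm(d, p^r)$, taking the lcm over cells at the end. The only cosmetic differences are that you make the block-diagonal reduction and the linear independence of the powers of $H^{(k)}$ explicit, and you treat all $N$ uniformly (via $C_N^j = 0$ for $j > N$) where the paper separates the cases $N \leq k-1$ and $N > k-1$.
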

\begin {proof}
 
Let $J$ be a Jordan cell  of dimension $k$ with  
eigenvalue $\lambda$, and let $d$ be the order of $ \lambda$. 
If $k=1 $ so that $ p^{-1} < k \leq 1 ,$ then $ J^{n} = E$ 
 iff $n$
is divisible by $d$.
     
\noindent Suppose $k > 1$ and $p^{r-1} < k \leq p^r,$
For $i \leq k-1 $ denote by $I_{i}$ the matrix with $\bar {1}$  
on the $i -$th over-diagonal
and $\bar {0}$ everywhere else. And let $I_{k} = I_{k+1} = 
\ldots = 0.$ Then $I_{1}^{n} = I_{n} \; $ and
$$
\noindent J^{n} = (\lambda E + I_{1})^{n} = 
\lambda ^{n} E + \lambda ^{n-1} C_{n}^{1} I_{1} + 
\lambda ^{n-2} C_{n}^{2} I_{2} + \ldots +
 C^n_n I_n.
$$
If $n \leq k-1, \; J^n \neq E \;$ because of the term $I_n .$
If $n > k-1,$
$$
\noindent J^{n} = (\lambda E + I_{1})^{n} = 
\lambda ^{n} E + \lambda ^{n-1} C_{n}^{1} I_{1} + 
\lambda ^{n-2} C_{n}^{2} I_{2} + \ldots +
\lambda ^{n-(k-1)} C_{n}^{k-1} I_{k-1}.
$$
Then 
$ J^{n} = E \Longleftrightarrow \lambda ^{n} = \bar {1}$ and
$ C_{n}^{1}, \ldots  C_{n}^{k-1}$ are divisible by $ p .$
By lemma \ref{lemma} minimal such $n$ is $lcm(d, p^r).$
\end{proof}

\section {The periods of adjoint vectors}

\begin{prop}\label{adjvec}
Let $\tilde{V}$ be a vector space over $F , \;
T  : \tilde{V}  \longrightarrow \tilde{V} \;$
 be an isomorphism  with Jordan
matrix $\;J, \; J \in GL(m,F).$ 
Consider the Jordan basis for $T$ in $\tilde{V}.$
 If a basis vector 
$e^s $ is $s$-th adjoint vector to a basis eigenvector $e^0 $ 
 with eigenvalue $\lambda ,$ i.e. 
\begin{equation}\label{adjeq}
(T - \lambda E)\,e^0=0 \; ,\;\; (T - \lambda E)\,e^i =e^{i-1} \;,\;
 i=1,\ldots s,
\end{equation}
then $e^s $ is periodic under $T$ with period $\;N=lcm(d,p^r),\;$
where $d$ is the order $\mbox{of} \;\lambda , \;\;\mbox{and} \;\;
 p^{r-1} < s+1 \leq p^r.$
\end{prop}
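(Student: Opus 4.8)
The plan is to localize the whole question to a single Jordan cell and then read off the period from the binomial computation already performed in the proof of Theorem~\ref{order}, combined with Lemma~\ref{lemma}. The relations (\ref{adjeq}) make the linear span $W=\operatorname{span}(e^0,e^1,\ldots,e^s)$ a $T$-invariant subspace: indeed $Te^0=\lambda e^0$ and $Te^i=\lambda e^i+e^{i-1}$ for $1\le i\le s$, so $Te^i\in W$ for every $i$. On $W$ the operator $T$ acts as a single Jordan cell of dimension $s+1$ with eigenvalue $\lambda$, and $e^s$ is a cyclic generator of that cell. Writing $T|_W=\lambda E+I_1$ with $I_1=T-\lambda E$ the nilpotent shift $e^i\mapsto e^{i-1}$, I have $I_1^{j}e^s=e^{s-j}$ for $0\le j\le s$ and $I_1^{j}e^s=0$ for $j>s$. (The case $s=0$ is immediate: then $e^s=e^0$ is an eigenvector, $T^ne^0=\lambda^ne^0=e^0$ iff $d\mid n$, in agreement with $N=lcm(d,p^0)=d$; so assume $s\ge 1$.)

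Next I would expand $T^ne^s$ exactly as in the proof of Theorem~\ref{order}:
\[
T^n e^s=(\lambda E+I_1)^n e^s=\sum_{j=0}^{s} C_n^{j}\,\lambda^{n-j}\,e^{s-j}
=\lambda^n e^s+C_n^1\lambda^{n-1}e^{s-1}+\cdots+C_n^s\lambda^{n-s}e^0 .
\]
Comparing coefficients in the basis $e^0,\ldots,e^s$, the equality $T^n e^s=e^s$ holds if and only if $\lambda^n=\bar 1$ and $C_n^1\lambda^{n-1}=\cdots=C_n^s\lambda^{n-s}=\bar 0$. Since $\lambda\neq 0$ is invertible in $F$, the latter conditions are equivalent to $p\mid C_n^1,\ldots,p\mid C_n^s$.

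Finally I would apply Lemma~\ref{lemma} with $k=s+1$ (so that $k-1=s$ and $p^{r-1}<s+1\le p^r$): divisibility of $C_n^1,\ldots,C_n^s$ by $p$ is equivalent to $p^r\mid n$, while $\lambda^n=\bar 1$ is equivalent to $d\mid n$. Hence $T^ne^s=e^s$ if and only if $lcm(d,p^r)\mid n$, so the minimal positive period of $e^s$ is $N=lcm(d,p^r)$, as asserted. I expect no serious obstacle; the one point deserving an explicit remark is that the period of the individual vector $e^s$ cannot be affected by the higher adjoint vectors, even when the full Jordan cell containing $e^0$ is strictly larger than $s+1$, since the expansion above shows that $T^ne^s$ only ever involves $e^0,\ldots,e^s$. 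This is precisely what makes the restriction to $W$ harmless and reduces the argument to the single-cell case of Theorem~\ref{order}.
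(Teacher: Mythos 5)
Your proof is correct and takes essentially the same route as the paper: both reduce to the expansion $T^n e^s = \lambda^n e^s + C_n^1\lambda^{n-1}e^{s-1}+\cdots+C_n^s\lambda^{n-s}e^0$ (the paper derives it by induction on $n$, you by the binomial theorem for $\lambda E + I_1$ restricted to the invariant span of $e^0,\ldots,e^s$) and then conclude via Lemma~\ref{lemma} with $k=s+1$. Your write-up is a bit more explicit about the $s=0$ case, the linear independence needed to compare coefficients, and the invertibility of $\lambda$, but the substance is identical.
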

\begin{proof}
We have $\;T^n e^0 = \lambda ^n e^0.\;$ 
So $e^0$ has period $d $ under $T.$
If $s \geq 1,$ then
$T e^s = \lambda e^s + e^{s-1} \;\; \mbox{and, by induction,} $
\begin{equation}\label{Tnes}
T^n e^s = \lambda ^n e^s + C^1_n \lambda ^{n-1}e^{s-1}+ \cdots
 + C^s_n \lambda ^{n-s}e^0,
\end{equation}
and the result follows from Lemma \ref{lemma}.
\end{proof} 

\begin{prop}\label{ddiv}
Let $\tilde{V},\; T $ and $J $ be as in proposition \ref{adjvec}. 
Suppose $J$ has $l$ Jordan cells
with eigenvalues $\lambda _1, \ldots ,\lambda _l$ (not necessary distinct),
of orders $d_1, \ldots , d_l $ respectively,
so that we can write 
$\tilde{V}=V_1 \oplus V_2 \oplus \ldots \oplus V_l$
where $V_i $ are the invariant subspaces corresponding to the Jordan cells. 
If the projection of a vector $Z \in \tilde{V} $ to $V_i$ is not zero, then the
period of $Z$ under $T$ is divisible by $d_i.$
\end{prop}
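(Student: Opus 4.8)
The plan is to reduce to a single Jordan cell via the direct-sum decomposition and then read the eigenvalue off the top coordinate of the orbit. First I would write $Z = Z_1 + \cdots + Z_l$ with $Z_j \in V_j$. Because each $V_j$ is $T$-invariant, we have $T^n Z = \sum_j T^n Z_j$ with $T^n Z_j \in V_j$, so by uniqueness of the decomposition, if $N$ denotes the period of $Z$ (that is, $T^N Z = Z$), then $T^N Z_j = Z_j$ for every $j$, and in particular $T^N Z_i = Z_i$. Thus it suffices to prove that $T^N Z_i = Z_i$ together with $Z_i \neq 0$ forces $d_i \mid N$.

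Next I would fix a Jordan basis $e^0, e^1, \ldots, e^{k_i-1}$ of $V_i$ adapted to the cell with eigenvalue $\lambda_i$, so that $(\ref{adjeq})$ holds with $\lambda = \lambda_i$, and write $Z_i = \sum_{s=0}^{k_i-1} c_s e^s$. Let $s_0$ be the largest index with $c_{s_0} \neq 0$; this exists since $Z_i \neq 0$. Applying the formula $(\ref{Tnes})$ term by term and collecting coefficients, I would express $T^n Z_i$ in this basis. The key observation is that each $T^n e^s$ involves only the vectors $e^m$ with $m \leq s$, since the nilpotent part $H^{(k_i)}$ shifts strictly downward; hence the coefficient of the leading vector $e^{s_0}$ in $T^n Z_i$ receives a contribution only from the $s = s_0$ summand and equals exactly $c_{s_0}\lambda_i^{\,n}$, with no interference from the adjoint terms.

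Finally, the relation $T^N Z_i = Z_i$ forces the coefficient of $e^{s_0}$ to be preserved, i.e. $c_{s_0}\lambda_i^{\,N} = c_{s_0}$; since $c_{s_0} \neq 0$ and $F$ is a field, this gives $\lambda_i^{\,N} = \bar{1}$, and therefore $d_i \mid N$ by the definition of $d_i$ as the order of $\lambda_i$. I do not expect a genuine obstacle here beyond the bookkeeping in extracting the leading coefficient; the only point that must be handled with care is that the off-diagonal adjoint contributions cannot affect the coefficient of $e^{s_0}$, which is precisely the triangular structure already encoded in $(\ref{Tnes})$. Note that this argument uses only the diagonal action and so does not invoke Lemma \ref{lemma}; the full period of $Z_i$ (which would bring in the $p^r$ factor) is not needed, since divisibility by $d_i$ alone is asserted.
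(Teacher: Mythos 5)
Your proof is correct and follows essentially the same route as the paper: expand the component in $V_i$ in the Jordan basis, observe that the triangular action of the cell makes the top nonzero coefficient transform by $\lambda_i^n$ alone, and conclude $\lambda_i^N = \bar 1$ from periodicity. The only difference is presentational—you spell out the reduction $T^N Z = Z \Rightarrow T^N Z_i = Z_i$ via the invariant direct-sum decomposition, which the paper leaves implicit when it reads off the coefficient of the leading basis vector.
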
 
\begin{proof}
The projection of vector $Z$ onto $V_i$ 
is a linear combination of basis vectors
belonging to $V_i :
Z_i = \alpha _0 e^0 + \alpha _1e^1 + \ldots + \alpha _s e^s,$ where  
$e^0, \ldots e^s$ are the  adjoint vectors to the eigenvector 
$e^0$ with eigenvalue $\lambda _i.$ 
Suppose that the highest non-zero coefficient is $\alpha _{\tau}.$
Then $T^n Z $ will have the coefficient $\lambda _i ^n \alpha _{\tau } $ 
at $e^{\tau}$ by formula (\ref{adjeq}) and the result follows.
\end{proof} 

Let us consider the extensions of our linear spaces 
$\Hom (K, \Z /p),\; \mathcal{V} ,\;  V ,$ and linear operators 
$\sigma _x, \sigma ,\; T$ to the field $F.$ Denote the resulting dynamical
systems by $(\Hom (K,F), \sigma _x), (\mathcal{F}, \sigma ), (\tilde{V},T).$
Here $\mathcal{F} $ is the space of bi-infinite sequences $\; (\ldots , Z_{-1},
Z_0, Z_1 \ldots) ,\; z_j \in F^{\n}$ satisfying equation (\ref{rec0}), 
 $\tilde{V}= \pi _0 (\mathcal{F}) $ is $m- $dimensional linear vector space
over $ F $.
Applying propositions \ref{adjvec} and \ref{ddiv}
to the extended operator $T: \tilde{V} \longrightarrow \tilde {V}$ 
we get the following result:
\begin{thm}\label{finalgen}
Let $\lambda _j ,\; j=1,\ldots , l ,$ 
be the (non-zero) roots of the Alexander polynomial of the knot,
$d_j$ be the order of $\lambda _j ,\;$
$k_j$ be the maximal size of a Jordan cell of the matrix $J$ 
with eigenvalue $\lambda _j \;$ $ \mbox{(see section 6)}.$ Let 
 $p^{r_j-1} < k_j \leq p^{r_j}.$ For each $\lambda _j $ 
consider the set $Q_j = \{1,\, lcm(d_j, p^i):  
i=0, \ldots r_j \}.$ Then the set of periods of orbits of 
extended dynamical system
$(\Hom(K,F),\sigma _x) $
 is a subset of the set 
$$
\{lcm(q_1,\ldots , q_l): q_j \in Q_j, j=1,\ldots ,l \}.
$$ 
\end{thm}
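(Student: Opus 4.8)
The plan is to transport the statement to the operator $T$ on $\tilde V$ and to read off periods directly from the Jordan structure of $J$. By Theorem \ref{theor22}, extended to the field $F$ as in the paragraph preceding the statement, $(\Hom(K,F),\sigma_x)$ is isomorphic to $(\tilde V, T)$, so a period of an orbit is exactly the period of a vector $Z \in \tilde V$ under $T$; since $J \in GL(m,F)$, the operator $T$ is invertible and its eigenvalues are precisely the non-zero roots $\lambda_1,\ldots,\lambda_l$ of the Alexander polynomial. First I would split $\tilde V = W_1 \oplus \cdots \oplus W_l$ into the generalized eigenspaces $W_j = \ker(T-\lambda_j E)^m$, which are $T$-invariant. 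Writing $Z = Z_1 + \cdots + Z_l$ with $Z_j \in W_j$, directness of the sum gives $T^n Z = Z$ iff $T^n Z_j = Z_j$ for every $j$, so the period of $Z$ is the $lcm$ of the periods of the $Z_j$. It therefore suffices to show that the period of each nonzero $Z_j$ lies in $Q_j$, assigning the period $1 \in Q_j$ when $Z_j = 0$.

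Within a fixed $W_j$ the operator $T$ preserves each Jordan cell, so I would first treat one cell of size $m$ with adjoint basis $e^0,\ldots,e^{m-1}$ and a component $\sum_s \alpha_s e^s$, and then take the $lcm$ over the cells of $W_j$. Let $\tau$ be the largest index with $\alpha_\tau \neq 0$. Applying formula (\ref{Tnes}) termwise, the coefficient of the top vector $e^\tau$ in $T^n(\sum_s \alpha_s e^s)$ is $\lambda_j^n \alpha_\tau$; since $\alpha_\tau \neq 0$, fixing this component forces $\lambda_j^n = 1$, i.e. $d_j \mid n$, which is the content of Proposition \ref{ddiv}. Granting $\lambda_j^n = 1$ and descending, the coefficient of $e^{\tau-i}$ yields the condition $C_n^i \equiv 0 \pmod p$ successively for $i = 1,\ldots,\tau$; conversely these congruences together with $\lambda_j^n = 1$ make $T^n$ fix the component. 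Hence its period is the least $n > 0$ with $\lambda_j^n = 1$ and $C_n^1,\ldots,C_n^\tau$ all divisible by $p$, which by Lemma \ref{lemma} equals $lcm(d_j, p^{\rho})$ with $p^{\rho-1} < \tau+1 \leq p^{\rho}$. Taking the $lcm$ over all cells of $W_j$ replaces $\tau$ by the largest adjoint index occurring anywhere in $Z_j$. This generalizes Proposition \ref{adjvec} from a single adjoint vector to an arbitrary combination, and it is the step I expect to demand the most care, since the triangular action of $T$ must be tracked across several cells sharing the eigenvalue $\lambda_j$.

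Finally I would bound $\tau$. The largest adjoint index in any cell of $W_j$ is one less than the cell size, hence $\tau \leq k_j - 1$, so $\tau+1 \leq k_j \leq p^{r_j}$ and therefore $\rho \leq r_j$. Thus the period of a nonzero $Z_j$ equals $lcm(d_j, p^{\rho})$ for some $0 \leq \rho \leq r_j$, an element of $Q_j$; together with the value $1$ for $Z_j = 0$, every $Z_j$ contributes a period $q_j \in Q_j$. Taking the $lcm$ over $j$ as in the first step shows that the period of $Z$ is $lcm(q_1,\ldots,q_l)$ with $q_j \in Q_j$, which is the asserted inclusion. I would emphasize that this argument gives only containment, not equality, since a prescribed tuple $(q_1,\ldots,q_l)$ need not be realized by an actual vector $Z$, and distinct tuples may yield the same $lcm$.
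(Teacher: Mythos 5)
Your proof is correct and takes essentially the same route as the paper: the paper obtains this theorem in one line by applying Propositions \ref{adjvec} and \ref{ddiv} to the extended operator $T$, which is exactly the Jordan-cell decomposition and period computation you carry out via formula (\ref{Tnes}) and Lemma \ref{lemma}. Your write-up additionally makes explicit the step the paper leaves to the reader --- that the period of an \emph{arbitrary} linear combination of adjoint vectors (within a cell, and across cells sharing an eigenvalue) is governed by the top nonzero adjoint index --- which is a faithful completion of the paper's sketch rather than a departure from it.
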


\section{ Two-bridge knots }

Let $\mathcal{K}$ be a 2-bridge knot,
 and  $\; \mathcal{A}(t) = c_{0}+c_{1}t+ \cdots +c_{m}t^{m} \;$
 be its Alexander polynomial
over $ \Z /p .$
Theorem \ref{theor22}  gives
the the following corollary: 
\begin{thm}\label{prop10.1}

\noindent
\noindent
i) $( \Hom (K, \Z /p), \, \sigma _{x})$ is isomorphic to the linear space of 
bi-infinite sequences
$$ \ldots y_{-1}, y_{0}, y_{1} \ldots \;\;\;\;, \; y_{j} \in \Z /p ,$$
satisfying
\begin{equation}\label{rel1}
 c_{0}y_{j}+c_{1}y_{j+1}+ \cdots +c_{m}y_{j+m} \;= 0,
\end{equation}
with the left shift ( a linear operator ) $\sigma . $

\noindent
ii) $( \Hom (K, \Z /p), \, \sigma _{x}))$ is isomorphic to $(\Z /p)^{m}$
with the linear operator $T$ with matrix 
\[
\left (
\begin{matrix}
0 & 1& &&&\\
&0&1&&& \\
&&\ddots&&\\
\\
&&&&0&1\\
- \frac{c_{0}}{c_{m}} & - \frac{c_{1}}{c_{m}}&& \ldots && - \frac{c_{m-1}}{c_{m}}
\end{matrix}
\right )
\]

\noindent
iii) The Jordan matrix for $\sigma _x (\sigma ,  T )$  has exactly one cell
 for each root of
Alexander polynomial $\mathcal{A} (t).$

\end{thm}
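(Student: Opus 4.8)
The plan is to reduce everything to a single scalar recurrence by using the defining feature of a 2-bridge knot: its group has the standard presentation $\langle x, x_1 \mid r\rangle$ with two meridional generators and a single relator. In the language of Sections 2--4 this is exactly the case $\n = 1$ with one relator, so the $\Z$-dynamic presentation of $\tilde K = K/[K,K]$ has a single generator orbit $\{a_{1j}\}$ and a single relator orbit, and the Alexander matrix of $G$ is the $1\times 1$ matrix whose only entry $\gamma(\partial r/\partial a_1)=\sum_k A^k t^k$ is the Alexander polynomial $\mathcal A(t)=\sum_k c_k t^k$. For part i) I would then simply specialize Theorem~\ref{theor1}: when $\n=1$ the vectors $Y_j$ collapse to scalars $y_j=\rho(a_{1j})\in\Z/p$, and the matrix recurrence $A^0Y_j+\cdots+A^MY_{M+j}=0$ becomes the scalar recurrence (\ref{rel1}) with $A^k=c_k$. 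Thus i) is just the $\n=1$ instance of the isomorphism already established.

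For part ii) I would carry out the textbook reduction of an order-$m$ linear recurrence to a first-order system, and the one fact that needs checking is that $c_0$ and $c_m$ are units in $\Z/p$. This holds because the Alexander polynomial of a 2-bridge knot has extreme coefficients $\pm1$, so reduction mod $p$ neither drops the degree nor kills the constant term. With $c_m$ a unit a bi-infinite solution of (\ref{rel1}) is determined forward from the window $(y_0,\dots,y_{m-1})$, and with $c_0$ a unit it is determined backward as well; hence $(\dots,y_{-1},y_0,y_1,\dots)\mapsto(y_0,\dots,y_{m-1})$ is an isomorphism onto $(\Z/p)^m$. Under it the shift sends $(y_0,\dots,y_{m-1})$ to $(y_1,\dots,y_m)$ with $y_m=-c_m^{-1}(c_0y_0+\cdots+c_{m-1}y_{m-1})$, which is precisely multiplication by the displayed companion matrix $T$; moreover $\det T=\pm c_0/c_m$ is a unit, matching $0\notin\spec(T)$.

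For part iii) the key point is that a companion matrix is non-derogatory. Its characteristic polynomial is $c_m^{-1}\mathcal A(t)$ up to sign, and for every eigenvalue $\lambda$ the first $m-1$ rows of $T-\lambda E$ are independent (they carry the superdiagonal of $1$'s), so $\operatorname{rank}(T-\lambda E)=m-1$ and $\dim\ker(T-\lambda E)=1$; equivalently the minimal and characteristic polynomials of $T$ coincide. Therefore, over the splitting field $F$, the Jordan form of $T$ contains exactly one cell for each root $\lambda$ of $\mathcal A$, of size equal to the multiplicity of $\lambda$. By Theorem~\ref{theor2} this Jordan matrix is that of $\sigma_x$, which is the assertion of iii).

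The only step carrying real content is the reduction in part i): once the Alexander module is known to be cyclic with $1\times 1$ Alexander matrix equal to $\mathcal A(t)$, the rest is elementary linear algebra over $\Z/p$ and $F$. The main thing to get right is therefore the topological input---that the two-generator presentation of a 2-bridge group indeed produces this cyclic module, and that the extreme coefficients of $\mathcal A$ survive as units modulo $p$---since the companion-matrix form and the invertibility of $T$ both depend on it. This is where the special geometry of 2-bridge knots is used, and I would cite the standard 2-bridge presentation for it.
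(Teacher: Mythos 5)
Your overall route coincides with the paper's: part i) specializes Theorem~\ref{theor1} to the standard two-generator, one-relator presentation of a two-bridge knot group (so $\n=1$ and the Alexander matrix is the $1\times 1$ matrix whose entry is the Alexander polynomial, up to a unit $\pm t^k$); part ii) is the window/companion-matrix reduction; part iii) is the observation that a companion matrix is non-derogatory --- your ``$\operatorname{rank}(T-\lambda E)=m-1$'' is exactly the paper's ``the greatest common divisor of all minors of order $m-1$ is $1$'' --- combined with Theorem~\ref{theor2} to transfer the Jordan form to $\sigma_x$. That skeleton is sound and matches the paper's proof.

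However, the one fact you single out as the essential topological input is false. The Alexander polynomial of a two-bridge knot need \emph{not} have extreme coefficients $\pm 1$: the two-bridge (twist) knots $5_2$ and $6_1$ have Alexander polynomials $2t^2-3t+2$ and $2t^2-5t+2$, so for $p=2$ the reduction kills both the leading and the constant coefficients (for $5_2$ one gets $t$ mod $2$), and the degree \emph{does} drop. What actually makes part ii) work is a normalization convention rather than any special geometry of two-bridge knots: in the statement, $c_0+c_1t+\cdots+c_mt^m$ is the Alexander polynomial \emph{over} $\Z/p$, so $c_m\neq 0$ holds by the definition of $m$; and since the integer Alexander polynomial of a knot is palindromic ($c_i=c_{m'-i}$ in a suitable normalization), whenever $p$ kills the top $j$ coefficients it kills the bottom $j$ as well, so after discarding the resulting unit factor $t^j$ (Alexander polynomials are only defined up to $\pm t^k$) one may --- and must --- also take $c_0\neq 0$. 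Equivalently, $c_0\neq 0$ can be read off from the fact $0\notin\spec(T)$, established for all knots in Section~4. With this substitution your forward/backward determination of a bi-infinite solution from the window $(y_0,\ldots,y_{m-1})$, the invertibility of the companion matrix, and part iii) all go through verbatim; just note that $m$ is then the mod-$p$ degree, which can be strictly smaller than the degree of the integer Alexander polynomial.
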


\begin{proof}

\noindent
i) follows from theorem \ref{theor1}, if we choose a presentation with two
generators and one relation for $G$.

\noindent
ii) is obvious if we take  the linear operator $T: (\Z /p)^{m}
 \rightarrow (\Z /p)^{m}$
\[ 
T \left ( 
\begin{array}{l}
y_{0}\\
y_{1}\\
\vdots\\
y_{m-1}
\end{array} 
\right )
=
\left ( 
\begin{array}{l}
y_{1}\\
y_{2}\\
\vdots\\
y_{m}
\end{array} 
\right )
\]

\noindent
iii) We have \[\Det (T-tE) = \frac{c_{0}}{c_{m}} + \frac{c_{1}}{c_{m}}t+ 
\ldots \frac{c_{m-1}}{c_{m}}t^{m-1}+t^{m}= \frac{1}{c_{m}} \mathcal{A} (t),
\]
and the greatest common divisor of all minors of order $m-1$ of matrix $T$
is $1$. It follows
that in the Jordan form for $T$ for each root of $\mathcal{A}(t)$
there is exactly one cell of the size equal to the multiplicity of the root.
\end{proof}

Suppose all roots of $\mathcal{A} (t)$ belong to the same orbit of Galois
group action. Denote them by $\; \lambda _1, \ldots , \lambda _l .$ 
 Then they all
have the same multiplicity $k$ and order $d$. Let
$ p^{r-1}<k \leq p^r.$

\begin{thm}
There are orbits of $( \Hom (K, \Z /p ), \sigma _x) $ of periods 
$lcm (d,p^i) $ for all $\; i $ such that $ 0 \leq i \leq r.$
\end{thm}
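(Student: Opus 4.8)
The plan is to realize each prospective period $lcm(d,p^i)$ by producing an explicit $\Z/p$-rational vector whose orbit under the operator $T$ of Theorem~\ref{prop10.1} has exactly that length. Working over the splitting field $F$, whose degree over $\Z/p$ equals the size $l$ of the Galois orbit, Theorem~\ref{prop10.1}(iii) gives one Jordan cell of $T$ for each root $\lambda_j$; since the $\lambda_j$ form a single Galois orbit these cells all have the common size $k$. Fix a Jordan chain $e^0,e^1,\ldots,e^{k-1}$ for the cell of $\lambda_1$, with entries in $F$, so that $(T-\lambda_1 E)e^0=0$ and $(T-\lambda_1 E)e^s=e^{s-1}$.

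First I would fix $i$ with $0\le i\le r$ and choose an index $s_i$ with $p^{i-1}<s_i+1\le p^i$ and $s_i\le k-1$; this is possible because $p^{r-1}<k\le p^r$ (take $s_i+1=k$ when $i=r$ and $s_i+1=p^i$ when $i<r$). By Proposition~\ref{adjvec} the adjoint vector $e^{s_i}$ is periodic under $T$ with period exactly $lcm(d,p^i)$, since $\lambda_1$ has order $d$ and $p^{i-1}<s_i+1\le p^i$. The remaining task is to descend from this $F$-vector to a $\Z/p$-vector without changing the period.

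The key step is Galois descent. Since $T$ has entries in $\Z/p$, the Frobenius $\phi:x\mapsto x^p$, acting coordinatewise on $F^m$, commutes with $T$ and carries the chain for $\lambda_1$ to a genuine Jordan chain for $\lambda_1^p$; iterating, $\phi^t(e^{s_i})$ is the $s_i$-th adjoint vector in the cell of $\lambda_1^{p^t}$, and as $t$ ranges over $0,\ldots,l-1$ these occupy the $l$ distinct cells. I would then set
$$Z_i=\sum_{t=0}^{l-1}\phi^t(e^{s_i}).$$
Coordinatewise $Z_i$ is a sum of all Frobenius conjugates of each entry of $e^{s_i}$, hence lies in the prime field, so $Z_i\in(\Z/p)^m$ and corresponds to a genuine point of $\Hom(K,\Z/p)$. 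Moreover $Z_i\neq 0$: its summands lie in the $l$ independent invariant subspaces attached to the distinct cells, and each summand is nonzero.

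Finally I would read off the period. Writing $Z_i=\sum_j f_j$ with $f_j=\phi^{t_j}(e^{s_i})$ in the $j$-th invariant subspace, independence of these subspaces makes $T^nZ_i=Z_i$ equivalent to $T^nf_j=f_j$ for every $j$. Each $f_j$ is an $s_i$-th adjoint vector for the eigenvalue $\lambda_j$, which is Galois-conjugate to $\lambda_1$ and therefore again of order $d$; so by Proposition~\ref{adjvec} every $f_j$ has period $lcm(d,p^i)$, whence $Z_i$ does as well. Letting $i$ run through $0,\ldots,r$ yields orbits of all the asserted periods. I expect the descent to be the only real obstacle: one must verify that the Frobenius-sum vector stays nonzero and that all its conjugate components share the single order $d$ and cell size $k$, which is exactly where the single-Galois-orbit hypothesis enters.
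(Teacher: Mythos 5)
Your proof is correct, and its core strategy coincides with the paper's: for each $i$ pick a depth $s$ with $p^{i-1}<s+1\le p^i$, take the $s$-th adjoint vectors in all $l$ Jordan cells, sum them over the Galois orbit to get a $\Z /p$-rational vector, and read off the period from the direct-sum decomposition into cells together with Proposition \ref{adjvec} --- that is exactly the content of the paper's Proposition \ref{integerseq}. Where you genuinely diverge is in how the conjugate chains and the rationality of the sum are produced. The paper first proves Proposition \ref{jordanbasis}: an explicit Jordan basis of the sequence space $\mathcal{F}$ by divided derivatives, $e^s_{\lambda\, j}=\frac{1}{s!}\frac{\partial ^s}{\partial \lambda ^s}\lambda ^j=C^s_j\lambda ^{j-s}$, whose proof needs a separate linear-independence argument (a polynomial of degree $lk-1$ cannot have $l$ roots each of multiplicity $k$); Galois invariance of $e^s_{\lambda _1}+\cdots +e^s_{\lambda _l}$ is then visible from the symmetry of these formulas. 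You instead take one abstract chain supplied by Theorem \ref{prop10.1}(iii) and generate the chains in the remaining cells by the coordinatewise Frobenius $\phi$, using that $\phi$ commutes with the $\Z /p$-rational operator $T$ and carries a chain for $\lambda _1$ to a chain for $\lambda _1^p$; rationality of $Z_i$ is then just the statement that traces lie in the prime field. Your route is shorter, bypasses Proposition \ref{jordanbasis} entirely, and is more robust, since it uses only Frobenius equivariance rather than the explicit shape of the eigenvectors; what the paper's construction buys in exchange is concrete formulas for the periodic sequences themselves, which is what makes the worked examples of Section 10 computable. In fact the two constructions agree, because $\phi (e^s_{\lambda})=e^s_{\lambda ^p}$, so your $Z_i$ is the paper's $e^{s_i}_{\lambda _1}+\cdots +e^{s_i}_{\lambda _l}$ for a suitable choice of chain. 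Two small points you should state explicitly: each $\phi ^t(e^{s_i})$ has the same exact period as $e^{s_i}$ (your order-preservation remark works, or simply note $T^n\phi ^t(e^{s_i})=\phi ^t(T^ne^{s_i})$ and that $\phi ^t$ is injective), and the passage from $Z_i\in (\Z /p)^m$ to an actual point of $\Hom (K,\Z /p)$ uses part (ii) of Theorem \ref{prop10.1}, which identifies the rational phase space with all of $(\Z /p)^m$.
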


The proof will follow from propositions \ref{jordanbasis} and 
\ref{integerseq}.

\begin{prop}\label{jordanbasis}
The space $\mathcal{F} $  of all sequences
$ (\ldots z_{-1}  , z_0 , z_1 \ldots ),  \; z \in F,$
satisfying $$c_0z_j+c_1z_{j+1} + \cdots + c_mz_{j+m} = 0 $$
with the left shift $\sigma$  has the Jordan basis 
$ e^0_{\lambda _1} \ldots e^{k-1}_{\lambda _1}, \ldots , \;
 e^0_{\lambda _l} \ldots e^{k-1}_{\lambda _l },\;$
 where $e^i_{\lambda} \; $is a bi-infinite sequence
$
e^i_{\lambda} = \{ e^i_{\lambda  j}\}\, _{j \in \Z }\; :$
$$
\;e^0_{\lambda j}= \lambda ^j,\;
e^1_{\lambda j}= \frac{\partial}{\partial \lambda} \lambda ^j
\;\;,  \ldots \
;,\;
\;e^{k-1}_{\lambda j}=\frac{1}{(k-1)!} 
\frac{\partial^{k-1}}{\partial \lambda ^{k-1}} \lambda ^j \;
\mbox{ for} \;\lambda = \lambda _1, \ldots, \lambda _l\;.
$$
\end{prop}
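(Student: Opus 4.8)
The plan is to recognize $\mathcal{F}$ as the kernel of the operator $L=\mathcal{A}(\sigma)=\sum_{n=0}^{m}c_{n}\sigma^{n}$ acting on the space $F^{\Z}$ of all bi-infinite sequences, where $\sigma$ is the left shift, and then to exhibit enough independent Jordan chains inside this kernel. Since $c_{0}\neq 0$ and $c_{m}\neq 0$, the recurrence determines a sequence uniquely from any $m$ consecutive entries, so $\dim_{F}\mathcal{F}=m$; and because every root of $\mathcal{A}$ is nonzero of multiplicity $k$ and there are $l$ of them, $m=\deg\mathcal{A}=lk$. Thus the $lk$ proposed vectors $e^{0}_{\lambda_{s}},\ldots,e^{k-1}_{\lambda_{s}}$ ($s=1,\ldots,l$) are exactly $\dim\mathcal{F}$ in number, so it suffices to show that each lies in $\mathcal{F}$ and that together they are linearly independent.

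First I would record the Jordan-chain relations. Interpreting $e^{i}_{\lambda j}=\tfrac{1}{i!}\partial_{\lambda}^{i}\lambda^{j}=C_{j}^{i}\lambda^{j-i}$ as a \emph{divided} (Hasse) derivative --- the correct reading in characteristic $p$, where an $i!$ in the denominator may vanish --- a one-line application of Pascal's identity $C_{j+1}^{i}-C_{j}^{i}=C_{j}^{i-1}$ gives $(\sigma-\lambda E)e^{0}_{\lambda}=0$ and $(\sigma-\lambda E)e^{i}_{\lambda}=e^{i-1}_{\lambda}$ for $1\le i\le k-1$; this is precisely the content of the Proposition in \S5.1 with $v_{0}=1,\ v_{1}=\cdots=0$. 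Hence on $W_{\lambda}:=\mathrm{span}(e^{0}_{\lambda},\ldots,e^{k-1}_{\lambda})$ the shift acts as a single Jordan block $\lambda E+N$ with $N$ nilpotent of order $k$.

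Next I would verify membership $W_{\lambda}\subseteq\mathcal{F}$ by evaluating $L$ on this block. Using the Hasse--Taylor expansion $L|_{W_{\lambda}}=\mathcal{A}(\lambda E+N)=\sum_{a\ge 0}\big(\tfrac{1}{a!}\partial^{a}\mathcal{A}\big)(\lambda)\,N^{a}$, legitimate because $N$ is nilpotent, the terms with $a\ge k$ vanish since $N^{k}=0$, while the terms with $a<k$ vanish because $\lambda$ is a root of multiplicity $k$, so its first $k$ divided derivatives are zero (equivalently $(\mu-\lambda)^{k}\mid\mathcal{A}(\mu)$). Therefore $L|_{W_{\lambda}}=0$, i.e. every $e^{i}_{\lambda}\in\ker L=\mathcal{F}$. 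Equivalently, one checks directly that $\sum_{n}c_{n}e^{i}_{\lambda,j+n}=\big(\tfrac{1}{i!}\partial_{\mu}^{i}\big)\big[\mu^{j}\mathcal{A}(\mu)\big]\big|_{\mu=\lambda}$ and applies the Leibniz rule for divided derivatives.

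Finally, independence is pure linear algebra: each $e^{0}_{\lambda_{s}},\ldots,e^{k-1}_{\lambda_{s}}$ is a Jordan chain with nonzero bottom $e^{0}_{\lambda_{s}}=\{\lambda_{s}^{j}\}$ (nonzero since $\lambda_{s}\neq 0$), hence linearly independent, and chains attached to the distinct eigenvalues $\lambda_{1},\ldots,\lambda_{l}$ lie in distinct generalized eigenspaces of $\sigma$, so their union stays independent. Having exhibited $lk=m=\dim\mathcal{F}$ independent vectors, they form a basis, and by the chain relations of the second step it is a Jordan basis for $\sigma$. The only genuinely delicate point throughout is the characteristic-$p$ bookkeeping: one must consistently use divided (Hasse) derivatives and read ``multiplicity $k$'' as divisibility by $(\mu-\lambda)^{k}$, since the naive derivative computation breaks down once $k\ge p$.
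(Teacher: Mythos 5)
Your proof is correct, and it reaches the conclusion by a partly different route than the paper, so a comparison is worthwhile. The shared skeleton: both you and the paper count $\dim_F\mathcal{F}=m=lk$, and both establish the chain relations $(\sigma-\lambda E)e^{0}_{\lambda}=0$, $(\sigma-\lambda E)e^{i}_{\lambda}=e^{i-1}_{\lambda}$ (your Pascal-identity computation with divided powers is exactly the specialization $v_{0}=1$, $v_{1}=\cdots=0$ of the Proposition of Section 5.1, which is also what the paper recomputes via the Leibniz rule). You diverge in two places. For linear independence the paper argues concretely: it forms the matrix $Q$ whose columns are the initial length-$m$ segments of the proposed basis vectors and shows $Q$ is non-degenerate, since a vanishing combination of its rows would produce a polynomial of degree at most $lk-1$ having each $\lambda_{s}$ as a root of multiplicity $k$; you instead invoke the soft fact that Jordan chains with nonzero bottom vectors attached to distinct eigenvalues are automatically independent. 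Both are valid; the paper's version yields slightly more (invertibility of the coordinate matrix of initial segments), while yours is shorter and never uses that all roots have equal multiplicity, so it transfers verbatim to the several-Galois-orbit situation. More substantively, you verify membership $W_{\lambda}\subseteq\mathcal{F}$ by evaluating $\mathcal{A}(\sigma)|_{W_{\lambda}}=\mathcal{A}(\lambda E+N)=\sum_{a\ge 0}\bigl(\tfrac{1}{a!}\partial^{a}\mathcal{A}\bigr)(\lambda)\,N^{a}=0$ (divided derivatives, $N^{k}=0$, multiplicity $k$); the paper never explicitly checks that the sequences $e^{i}_{\lambda}$ satisfy the recurrence, and since independence plus a dimension count alone does not place them in $\mathcal{F}$, your Taylor-expansion step supplies an argument the paper leaves implicit. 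Finally, your insistence on Hasse (divided) derivatives is necessary rather than cosmetic: in the paper's own example of $9_{1}$ with $p=3$ one has $k=8$, so the coefficient $\frac{1}{(k-1)!}$ is literally undefined in characteristic $3$, and both the statement and the root-counting independence argument only make sense when $\frac{1}{i!}\partial^{i}_{\lambda}\lambda^{j}$ is read as $C^{i}_{j}\lambda^{j-i}$, exactly as you read it.
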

\begin{proof}
First we prove that these sequences form a basis.
The dimension of $\mathcal{F}$ is equal to the degree of the Alexander 
polynomial, 
so we have right number of vectors. To prove their linear independence, 
it is enough to prove that the  matrix $Q$ whose columns are 
formed by segments of length $m$ of the sequences above is non-degenerate. 
More precisely, $Q$ is obtained from $l$ blocks, $l$ being the number of roots
of $\mathcal{A}(t), $ $i$ numerates columns, $j$ numerates rows   
\begin{equation} \label{Q}
Q\;=\;
\left [ e^{\,i}_{ \lambda _1\, j} 
\right ]
\cdots
\left [ e^{\,i}_{\lambda _l \,j}
\right ]
\mbox{\;\;\;where\;}
\left \{
  \begin{array}{l}
i=0, \ldots k-1
\\j=0, \ldots lk-1
\end{array}
\right.
\end{equation}

\noindent
Suppose $Q$ is degenerate. Then there is a linear combination of rows
with coefficients, say, $b_0, \ldots b_{lk-1} ,$ that is equal to zero.
For each $\lambda = \lambda _1,\ldots ,\lambda _l $     we have 
$$
b_0 +b_1 \lambda + \cdots +b_{lk-1} \lambda ^{lk-1}=0,\;\;
\sum_{j=0}^{lk-1} b_j \frac{1}{i!} \frac{\partial ^i}{\partial \lambda ^i }
\lambda ^j \;=0, \;\;\mbox{for}\; i=1 \ldots k-1,
$$
 But then polynomial 
$p(t)= b_0+b_1t+ \ldots b_{lk-1}t^{lk-1} $ and the polynomials 
$\frac{1}{i!}\frac{\partial ^i}{\partial t^i}p(t), \;\;i=1,\ldots k-1$
all have roots $\lambda _1, \ldots , \lambda _l \,,$ 
i.e. $\lambda _1  \ldots \lambda _l \;$
are the roots of $p(t)\,,$ each of multiplicity $k\;,$ while the degree 
 of $p(t) $ is $lk-1.$ Contradiction.

Now we'll prove that the sequence $e^0_{\lambda },\ldots , e^{k-1}_{\lambda } $
is a sequence of adjoint vectors for each $\lambda   = \lambda _1, \ldots ,
\lambda _l .$ Indeed,
$\;e^0_{\lambda }= \{ \lambda ^j \}\: _{j \in \Z} \; $ 
is an eigenvector
for $\sigma $ with eigenvalue $\lambda,  \;$ since
$$ (\sigma - \lambda E)\, e^0_{\lambda} =  \{ \lambda ^{j+1} \}\: _{j \in \Z}-
\lambda \{ \lambda ^j \}\: _{j \in \Z}\; =\, \{0 \}_{j \in \Z }\;\;\;(\;E\;\; 
\mbox{is an identity operator}).
$$
And $e^i_{\lambda}$ is its $i\,$-th adjoint vector:
\begin{equation}\label{adj}
 (\sigma  - \lambda E) e^i_{\lambda}=e^{i-1}_{\lambda}.
\end{equation}
Indeed, note  that  for polynomials $\;P( \lambda ) ,\;  Q(\lambda ,)$
$$
\frac{\partial^n}{\partial \lambda ^n} (P(\lambda )Q(\lambda)) = 
\sum_{i=0}^{n} C_n^i \frac{\partial^i}{\partial \lambda ^i}P(\lambda )\,
\frac{\partial^{n-i}}{\partial \lambda ^{n-i}}Q(\lambda ).
$$
So
$$
e^i_{\lambda \,j+1}-\lambda \, e^i_{\lambda j} \;
 = \;\frac{1}{i!}\, \frac{\partial^{i}}{\partial \lambda ^{i}} 
(\lambda  ^j \lambda )- \lambda\, 
 \frac{1}{i!}\, \frac{\partial^{i}}{\partial \lambda ^{i}} 
(\lambda  ^j)\;=
$$
$$
=\;
\frac{1}{i!}\, (\frac{\partial^{i}}{\partial \lambda ^{i}} 
\lambda  ^j) \, \lambda + \frac{1}{i!}\,C^1_i\, (\frac{\partial^{i-1}}
{\partial \lambda ^{i-1}}\lambda ^j)     \,\frac{\partial}{\partial \lambda } 
\lambda -\lambda \,\frac{1}{i!}\, \frac{\partial^{i}}{\partial \lambda ^{i}} 
\lambda  ^j \;=
$$
$$
 =
 \frac{1}{(i-1)!}\,
 \frac{\partial^{i-1}}{\partial \lambda ^{i-1}} 
\lambda  ^j \;= \;e^{i-1}_{\lambda j}  
 $$
\end{proof} 

Let us  call the sequences $ (\ldots , y_{-1},\, y_0,\, y_1 \ldots )\;\; 
\in \mathcal{V} \subset \mathcal {F} \;\;\mbox{with}\;\; y_j \in \Z /p  \;\;
 ${\it integer sequences }.

\begin{prop}\label{integerseq}
For any $ s $ such that $0 \leq s \leq k-1 ,$ 
the sequence $e^s_{\lambda _1}+
\ldots +e^s_{\lambda _l}$ is integer and has period equal to lcm$(d,p^r) $,
 where  $p^{r-1} < s+1 \leq p^r.$
\end{prop}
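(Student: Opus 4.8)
The plan is to reduce both assertions to explicit computations in the Jordan basis of Proposition~\ref{jordanbasis}, using the Galois symmetry of the root set for integrality and the binomial criterion of Lemma~\ref{lemma} for the period.

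First I would put the entries of $e^s_\lambda$ in closed form. From the definition $e^s_{\lambda j}=\frac{1}{s!}\frac{\partial^s}{\partial\lambda^s}\lambda^j$ together with $\frac{\partial^s}{\partial\lambda^s}\lambda^j=s!\,C_j^s\,\lambda^{j-s}$ one gets $e^s_{\lambda j}=C_j^s\,\lambda^{j-s}$, so the sequence $Z^s:=e^s_{\lambda_1}+\cdots+e^s_{\lambda_l}$ has $j$-th entry
\[
Z^s_j=C_j^s\sum_{i=1}^{l}\lambda_i^{\,j-s}.
\]
Since $\lambda_1,\dots,\lambda_l$ form a single orbit of the Galois group of $F$ over $\Z/p$ and are nonzero (they have a well-defined order $d$), each power sum $\sum_i\lambda_i^{\,n}$ with $n\in\Z$ is a symmetric function of the orbit, hence Galois-invariant, hence an element of $\Z/p$; as $C_j^s$ reduces from $\Z$ into $\Z/p$, this shows $Z^s_j\in\Z/p$ for all $j$, i.e.\ $Z^s$ is an integer sequence.

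For the period I would expand $\sigma^n Z^s$ in the Jordan basis. Applying formula~(\ref{Tnes}) of Proposition~\ref{adjvec} (with $T=\sigma$) to each summand gives
\[
\sigma^n Z^s=\sum_{i=1}^{l}\Bigl(\lambda_i^{\,n}e^s_{\lambda_i}+C_n^1\lambda_i^{\,n-1}e^{s-1}_{\lambda_i}+\cdots+C_n^s\lambda_i^{\,n-s}e^0_{\lambda_i}\Bigr).
\]
Because the vectors $e^{s'}_{\lambda_i}$ are linearly independent (Proposition~\ref{jordanbasis}), I can compare coefficients with $Z^s=\sum_i e^s_{\lambda_i}$. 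Matching the coefficient of $e^s_{\lambda_i}$ forces $\lambda_i^{\,n}=1$ for every $i$, i.e.\ $d\mid n$; matching the coefficient of $e^{s-q}_{\lambda_i}$ for $q=1,\dots,s$ forces $C_n^q\equiv 0\pmod p$, since $\lambda_i\neq0$. By Lemma~\ref{lemma} with $k-1=s$, these congruences hold exactly when $p^r\mid n$, where $p^{r-1}<s+1\le p^r$. Thus $\sigma^n Z^s=Z^s$ if and only if $n$ is a common multiple of $d$ and $p^r$, so the minimal period is $\operatorname{lcm}(d,p^r)$.

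The computations themselves are routine; the points needing care are that the coefficient comparison is legitimate only thanks to the basis property in Proposition~\ref{jordanbasis}, and that the nonvanishing of each $\lambda_i$ is used twice---to cancel the factors $\lambda_i^{\,n-q}$ in the period step and to make the negative-exponent power sums meaningful in the integrality step. I do not anticipate a genuine obstacle: both the bound $\sigma^{N}Z^s=Z^s$ with $N=\operatorname{lcm}(d,p^r)$ and its sharpness fall out of the single expansion above.
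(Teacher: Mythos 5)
Your proof is correct and follows essentially the same route as the paper: integrality via Galois invariance of the orbit sum, and the period via the Jordan basis of Proposition~\ref{jordanbasis} together with the binomial-coefficient criterion of Lemma~\ref{lemma} (via formula~(\ref{Tnes})). The only cosmetic difference is that where the paper cites Proposition~\ref{adjvec} for the period of each summand $e^s_{\lambda_i}$ and then appeals to the direct-sum decomposition $\mathcal{F}=U_1\oplus\cdots\oplus U_l$ to conclude for the sum, you unwind this into an explicit coefficient comparison in the Jordan basis, which yields the same conclusion (including minimality of the period) in a single computation.
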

 \begin{proof}
The sequence $e^s_{\lambda _1} +\ldots  +e^s_{\lambda _l}$ 
is integer since it is invariant 
under the action of Galois group.
 
We know from proposition \ref{adjvec} that $e^s_{\lambda _1}, \ldots , 
e^s_{\lambda _l } $
all have period $lcm(d,p^r). $  Their sum has the same period since
 $e^s_{\lambda _1}, \ldots , 
e^s_{\lambda _l } $ belong to the different invariant subspaces 
in the direct sum
$\mathcal{F}=U_1 \oplus \cdots \oplus U_l.$
\end{proof} 

In case of $l $  orbits of the Galois group action 
the Alexander polynomial (characteristic polynomial of operators $\sigma _x,
\; \sigma ,\;T) $
 is a product of $l$ mutually prime polynomials:
$\mathcal{A}(t)=f_1(t) \cdots f_l(t),$ and the space, say, $\tilde {V} $ 
is the direct sum of subspaces invariant under $T\;$ :
$\tilde {V} = U_1 \oplus \ldots \oplus U_l.\; $
Taking  sums of orbits from different subspaces
we obtain the final result for two-bridge knots:

\begin{thm}\label{finalthm}
Let $\lambda _j, j=1\ldots  l,$ be the (non-zero) roots of Alexander 
polynomial of a  two-bridge knot, of multiplicity $k_j$ and order $d_j$ 
respectively.
  Let $r_j $ be the integer numbers such that $p^{r_j-1}<k_j \leq p^{r_j}$.
 Denote the set 
$\{1, lcm(d_j, p^i): 0 \leq i \leq r_j \} $ by $Q_j $ for each root 
$\lambda _j $ (for conjugate roots these sets coincide). 
Then the set of periods of orbits of $( \Hom (K,\Z /p), \sigma _x)\; $ is 
the set
  $$ \{lcm(q_1,\ldots , q_l): \; q_j \in Q_j ,\; j=1, \ldots l \}.$$
\end{thm}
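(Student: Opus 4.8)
The plan is to prove the two inclusions separately: the containment of the period set in $\{\mathrm{lcm}(q_1,\ldots,q_l):q_j\in Q_j\}$ is an immediate consequence of the general estimate already obtained, while the realization of every such $\mathrm{lcm}$ as an actual period is the substantive half.

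For the easy inclusion I would pass to the extension over $F$ at no cost. The map $\Hom(K,\Z/p)\hookrightarrow\Hom(K,F)$ is equivariant for $\sigma_x$, so every periodic orbit of $(\Hom(K,\Z/p),\sigma_x)$ embeds as a periodic orbit of $(\Hom(K,F),\sigma_x)$ of the same period. By Theorem \ref{finalgen} every period of the extended system already lies in $\{\mathrm{lcm}(q_1,\ldots,q_l):q_j\in Q_j\}$; here I would note that in the two-bridge case Theorem \ref{prop10.1}(iii) produces a single Jordan cell per root, so the maximal cell size $k_j$ entering $Q_j$ coincides with the multiplicity of $\lambda_j$, exactly as in the statement. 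This yields one inclusion with no further work.

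For the reverse inclusion I would realize each target value by an explicit Galois-symmetrized construction, working orbit by orbit. Grouping the roots into Galois orbits, recall the primary decomposition $\tilde V=U_1\oplus\cdots\oplus U_l$ into $T$-invariant subspaces, one per orbit. Fix $q_j\in Q_j$ for each orbit. If $q_j=1$ take the zero vector in $U_j$; otherwise $q_j=\mathrm{lcm}(d_j,p^{i_j})$ for some $0\le i_j\le r_j$, and I choose $s_j$ with $p^{i_j-1}<s_j+1\le p^{i_j}$ and $0\le s_j\le k_j-1$ --- such an $s_j$ exists since $p^{r_j-1}<k_j\le p^{r_j}$ --- and set $Z_j:=\sum_{\lambda\in\text{orbit }j}e^{s_j}_{\lambda}$ in the basis of Proposition \ref{jordanbasis}. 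By Proposition \ref{integerseq} this $Z_j$ is an integer sequence lying in $U_j$ with period exactly $q_j$. Then $Z:=Z_1+\cdots+Z_l$ is again an integer sequence, i.e. an element of the original $\Z/p$ system.

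It remains to pin down the period of $Z$, and here the direct-sum structure does the work: since the $U_j$ are $\sigma$-invariant and the sum is direct, $\sigma^n Z=Z$ holds iff $\sigma^n Z_j=Z_j$ for every $j$, i.e. iff every $q_j$ divides $n$, so the period is $\mathrm{lcm}(q_1,\ldots,q_l)$ (Proposition \ref{ddiv} supplies the matching lower bound component by component). Thus every element of the target set is attained, and combined with the first inclusion this gives equality. The one bookkeeping point I would flag is that the construction is indexed by orbits while the statement is indexed by all roots; but conjugate roots share the same $Q_j$, and each $Q_j=\{1,d_j,\mathrm{lcm}(d_j,p),\ldots,\mathrm{lcm}(d_j,p^{r_j})\}$ is totally ordered by divisibility, hence closed under $\mathrm{lcm}$, so letting conjugate indices vary independently adds nothing beyond the orbit-indexed set. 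The genuine obstacle is therefore not any single hard step but controlling the period of the combined vector from both sides: it cannot drop (by directness and Proposition \ref{ddiv}) and cannot be inflated (since Proposition \ref{integerseq} pins each $q_j$ exactly).
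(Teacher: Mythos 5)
Your proposal is correct and follows essentially the same route as the paper: the upper bound via Theorem \ref{finalgen} applied through the equivariant inclusion $\Hom(K,\Z/p)\subset\Hom(K,F)$ (with Theorem \ref{prop10.1}(iii) identifying cell size with multiplicity), and the realization of each $\mathrm{lcm}$ by summing the Galois-symmetrized adjoint sequences of Propositions \ref{jordanbasis} and \ref{integerseq} across the invariant subspaces $U_1\oplus\cdots\oplus U_l$. Your explicit treatment of the root-versus-orbit indexing (each $Q_j$ being totally ordered under divisibility) and of the exact period of the combined vector merely spells out bookkeeping the paper leaves implicit in the phrase \lq\lq taking sums of orbits from different subspaces.\rq\rq
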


\section {Examples}
\begin{example}
Trefoil is a two-bridge knot with Alexander polynomial $ 1-t+t^2\,
\equiv \,(1+t)^2 \,$ in $\Z /3.$
It has one root, -1, of order 2 and multiplicity 2.
So $d=k=2,\; r=1 $ and the least common multiple of periods is $lcm(2,3) = 6.$
All non trivial orbits have period $2 $ or $6.$
The recurrent equation is $\,y_j -y_{j+1} +y_{j+2} \equiv 0 \,(\mbox{mod}3).$  
The orbits of $\sigma $ of period $2$ are given by sequence
$\;( \ldots  1,-\!1, 1, -\!1, \ldots) $
and the orbits of period $6$ are given by sequence 
$\;\; (\ldots 0, 1, 1, 0, -\!1, -\!1,  \; \ldots \;).$
\end{example}

\begin{example}
The figure 8 knot is a two-bridge knot with Alexander polynomial

\noindent
$-1+3t-t^2 \,\equiv  \, -1-t^2 \,$ in $\Z /3.$ 
It has two simple roots of order 4. 
So all non-trivial orbits have period 4.
The recurrent equation is $y_j +y_{j+2} =0 $
and  non-trivial orbits are given by sequences $(\ldots ,1,1,-1,-1,\ldots )$
and $(\ldots,1 ,0,-1,0, \ldots) .$
\end{example}

\begin{example}
For two-bridge knot $\mathcal{K} = 9_{1}$  the Alexander
polynomial is $ \mathcal{A}(t)=
(t^{2}-t+1)(t^{6}-t^{3}+1) \equiv (t+1)^{8} $ in  $\Z /3 .$
There is one root, $-1$, of order $2$ and  multiplicity $8$.  
So $ l=1, d=2, k=8, r=2,  $ and all  non-trivial orbits have periods 
2,\; 6 or  18. The recurrent equation is 
$$y_j-y_{j+1}+y_{j+2}-y_{j+3}+y_{j+4}-y_{j+5}+y_{j+6}-y_{j+7}+y_{j+8}=0 .$$
 Examples  are given by sequences:

\noindent
$\ldots , 1,-1, \ldots \;\;$ (period 2)

\noindent
$\ldots , 0,0,1,0,0,-1, \ldots \;\;$ (period 6)

\noindent
$\ldots,  0,0,0,0,0,0,0,1,1,0, 0, 0, 0, 0, 0, 0, -1,-1 \ldots \;\;$ (period 18).
\end{example}

\begin{example}
The knot $6_{2}$ has the Alexander polynomial $t^{4}-3t^{3}+3t^{2}-3t+1 \equiv 
t^{4}+1 \;\; \mbox{over} \;\; \Z /3, $ which has four simple roots of order $8$.
So  all its non-trivial orbits have period 8.
\end{example}

\end{document}